\newcounter{author}
\renewcommand*\author[1]{%
  \stepcounter{author}%
  \ifnum\c@author=1
    \gdef\@author{#1}%
  \else
    \xdef\@author{\unexpanded\expandafter{\@author\and#1}}%
  \fi
  \csgdef{author@\the\c@author}{#1}}
\newcommand*\email[1]{%
  \csgdef{email@\the\c@author}{#1}}
\newcommand*\address[1]{%
  \csgdef{address@\the\c@author}{#1}}
  \xdef\author@count{\the\c@author}%
\newcommand*\print@authors{%
  \ifnum\c@author>\author@count
  \else
    \print@author{\the\c@author}%
    \advance\c@author by 1
    \expandafter\print@authors
  \fi}
\newcommand*\print@author[1]{%
  \par\medskip
  \begin{tabular}{@{}l@{}}%
  %  \textsc{Addresses of \csuse{author@#1}}\\
    \csuse{address@#1}\\
    \textit{E-mail address}:
    \href{mailto:\csuse{email@#1}}{\csuse{email@#1}}
  \end{tabular}}
\newtheorem{theorem}{Theorem}[section]
\newtheorem{conjecture}[theorem]{Conjecture}
\newtheorem{corollary}[theorem]{Corollary}
\newtheorem{definition}[theorem]{Definition}
\newtheorem{remark}[theorem]{Remark}
\newtheorem{lemma}[theorem]{Lemma}
\newtheorem{proposition}[theorem]{Proposition}
\newenvironment{proof}[1][Proof]{\textbf{#1.} }{\hfill $\Box$ \medskip} % \rule{0.5em}{0.5em}}
\newtheorem{example}[theorem]{Example}
\newtheorem{fact}[theorem]{Fact}
\def\Aff{{\mathbb A}}
\def\PP{{\mathbb P}}
\def\Plm{{\mathbb P}^{\ell m-1}}
\def\F{{\mathbb F}}
\def\Fq{{\mathbb F}_q}
\def\Mat{{\mathbb M}}
\def\Matlm{{\Mat}_{\ell\times m}}
\def\Det{{\mathcal D}}
\def\Dt{{\Det}_t}
\def\Dtp{\widehat{\Det}_t}
\def\It{{\mathcal{I}}_{t+1}}
\def\Ct{C_{\det}(t;\ell,m)}
\def\Cpt{\widehat{C}_{\det}(t;\ell,m)}
\def\Cpone{\widehat{C}_{\det}(1;\ell,m)}
\newcommand{\wH}{{\mathrm{w_H}}}
\newcommand{\rk}{\operatorname{rank}}
\newcommand{\Ev}{\operatorname{Ev}}
\begin{document}
\title{Hyperplane Sections of Determinantal Varieties \\ over Finite Fields and Linear Codes}
%\author{Peter Beelen and Sudhir R. Ghorpade}
\author{Peter Beelen\thanks{Partially supported by the Danish Council for Independent Research (Grant No. DFF--4002-00367).}}
%\\ Department of Applied Mathematics and Computer Science / 
%Technical University of Denmark / 2800 Kgs. Lyngby, Denmark \\
%\and
%Sudhir R. Ghorpade \\
%Department of Mathematics, 
%Indian Institute of Technology Bombay, %\newline \indent
%Powai, Mumbai 400076, India}
\address{\sc Department of Applied Mathematics and Computer Science, \\ %\newline \indent
\sc Technical University of Denmark,  %\newline \indent Asmussens All{\'e}, Building 303B, Room 150, \newline \indent
2800 Kgs. Lyngby, Denmark}
\email{\tt pabe@dtu.dk}
  %  author two information
\author{Sudhir R. Ghorpade\thanks{Partially supported by  IRCC Award grant 12IRAWD009 from IIT Bombay.}}
%\address
\address{\sc Department of Mathematics,
Indian Institute of Technology Bombay,  \\%\newline \indent
\sc Powai, Mumbai 400076, India.}
\email{\tt srg@math.iitb.ac.in}
\date{\empty}
\maketitle

\begin{abstract} We determine the number of $\Fq$-rational points of hyperplane sections of classical determinantal varieties %consider a class of linear codes associated to projective algebraic varieties
defined by the vanishing of minors of a fixed size of a generic matrix, and identify sections giving the maximum number of $\Fq$-rational points. Further we consider similar questions for sections by linear subvarieties of a fixed codimension in the ambient projective space.  This is closely related to the  study of linear codes associated to determinantal varieties, and the determination of their weight distribution, minimum distance and generalized Hamming weights. The previously known results about these 
% known results
%Results in \cite{detcodes1}
%concerning the minimum distance and generalized Hamming weights of such codes 
are generalized and expanded significantly.
\end{abstract}

%%--The name(s) of the author(s), their e-mails and addresses should be placed here
%\textsc{Peter Beelen} \hfill \texttt{pabe@dtu.dk} \\
%{\small Department of Applied Mathematics and Computer Science,  \newline
%Technical University of Denmark, DK 2800, Kgs. Lyngby, Denmark} \\ \\
%\textsc{Sudhir R. Ghorpade} \hfill \texttt{srg@math.iitb.ac.in} \\
%{\small Department of Mathematics,
%Indian Institute of Technology Bombay, \newline
%Powai, Mumbai 400076, India}

%\keywords{linear codes, determinantal varieties, generalized Hamming weight, weight distribution}

\section{Introduction}

The classical determinantal variety %$\Dtp(\ell,m)$
defined by the vanishing all minors of a fixed size in a generic matrix is an object of considerable importance and ubiquity in algebra, combinatorics, algebraic geometry, invariant theory and representation theory. The defining equations clearly have integer coefficients and as such the variety can be defined over any finite field. The number of $\Fq$-rational points of this variety is classically known. We are mainly interested in a more challenging question of determining the number of $\Fq$-rational points of such a variety when intersected with a hyperplane in the ambient projective space, or more generally, with a linear subvariety of a fixed codimension in the ambient projective space. In particular, we wish to know which of these sections have the maximum number of $\Fq$-rational points.  These questions are directly related to determining the complete weight distribution and the generalized Hamming weights of the associated linear codes, which are caledl \emph{determinantal codes}. In this setting, the problem was considered in \cite{detcodes1} and a beginning was made by showing that the determination of the weight distribution is related to the problem of computing the number of generic matrices of a given rank with a nonzero ``partial trace''. More definitive % concrete 
results were obtained in the special case of  varieties defined by the vanishing of all $2\times 2$ minors of a generic matrix. Here we settle the question of determination of the weight distribution and  the minimum distance of %the %associated linear 
determinantal codes  in complete generality. Further, we %and also 
determine some initial and terminal %the first and last few 
generalized Hamming weights of determinantal codes. 
We also show that the determinantal codes have a very low dual minimum distance (viz., 3), which makes them rather interesting from the point of view of coding theory. Analogous problems have been considered for other classical projective varieties such as Grassmannians, Schubert varieties, etc., leading to interesting classes of linear codes which have been of some current interest; see, for example, \cite{N}, \cite{GT}, \cite{HJR2}, \cite{X}, \cite{GS}, and  the survey \cite{Little}.
%It may be noted that 
%In geometric terms, the determination of the minimum distance corresponds to the determination of the 
%maximum number of $\Fq$-rational points on hyperplane sections, whereas the determination of the $r$-th higher weight corresponds to finding the maximum number of $\Fq$-rational points on sections by projective linear subspaces of codimension $r$. 
%; see, e.g., %for example,  \cite{TV1}. 

As was mentioned in \cite{detcodes1} and further explained in the next section and Remark \ref{rem:Delsarte}, the results on the weight distribution of determinantal codes are also related to the work of Delsarte \cite{D} on eigenvalues of association schemes of bilinear forms using the rank metric as distance. We remark also that a special case of these results has been looked at by Buckhiester \cite{buckhiester}. 

A more detailed description of the contents of this paper is given in the next section, while the main results
%of the paper
are proved in the two %subsequent
sections that follow the next section. An  appendix contains self-contained and alternative proofs of some results that were deduced from the work of Delsarte and this might be of an independent interest. % as well.

%
%
%
%A useful and interesting way to construct a linear code is to consider a projective algebraic variety $V$ defined over the finite field $\Fq$ with $q$ elements together with a nondegenerate  embedding in a projective space, and to look at the projective system (in the sense of Tsfasman and Vl\u{a}du\c{t} \cite{TV1}) associated to the $\Fq$-rational points of $V$. A good illustration is provided by the case of Grassmann codes and Schubert codes, which have been of much interest; see, for example, \cite{N}, \cite{GT}, \cite{HJR2}, \cite{X} or the survey \cite{Little}.
%
%In this paper we consider a class of linear codes that are associated to classical determinantal varieties. These will be referred to as determinantal codes and were introduced in \cite{detcodes1}. The present work should be seen as a continuation of \cite{detcodes1}, in which the length and dimension of these codes were determined. In \cite{detcodes1} all possible weights of codewords in a determinantal code were determined, but since these weights are given by rather complex expressions, it is not a priori clear what the minimum distance of a determinantal code is. In \cite{detcodes1} this was determined for a special case, but here we settle the question in complete generality. Finally, we determine the first and last few generalized Hamming weights of determinantal codes.

\section{Preliminaries}
\label{sec:prelim}

Fix throughout this paper a prime power $q$, positive integers $t, \ell,m$, and an $\ell\times m$ matrix $X = (X_{ij})$ whose entries are independent indeterminates over $\Fq$. We will denote by $\Fq[X]$ the polynomial ring in the $\ell m$ variables $X_{ij}$ ($1\le i \le \ell$, $1\le j \le m$) with coefficients in $\Fq$. As usual, by a \emph{minor} of size $t$ or a $t\times t$ minor of $X$ we mean the determinant of a $t\times t$ submatrix of $X$, where $t$ is a nonnegative integer $\le \min\{\ell, m\}$. As per standard conventions, the only  $0\times 0$ minor of $X$ is $1$. We will be mostly interested in the class of minors of a fixed size, and this class is unchanged if $X$ is replaced by its transpose. With this in view, we shall always assume, without loss of generality, that $\ell\le m$. Given a field $\F$, we denote by $\Matlm (\F)$ the set of all $\ell\times m$ matrices with entries in $\F$. Often $\F=\Fq$ and in this case we may simply write
$\Matlm$ for $\Matlm (\Fq)$. Note that $\Matlm $ can be viewed as an affine space $\Aff^{\ell m}$ over $\Fq$ of dimension $\ell m$. For $0\le t \le \ell$, the corresponding classical determinantal variety (over $\Fq$) is denoted by $\Dt$ and defined as  the affine algebraic variety in $\Aff^{\ell m}$ given by the vanishing of all $(t+1) \times (t+1)$ minors of $X$; in other words
$$
\Dt(\ell,m) = \left\{ M\in \Matlm (\Fq) : \rk (M) \le t \right\}.
$$
Note that $\Det_0$ only consists of the zero-matrix. For $t=\ell$, no $(t+1)\times(t+1)$ minors of $X$ exist. This means that $\Det_\ell=\Matlm$, which is in agreement with the above description of $\Det_\ell$ as the set of all matrices of rank at most $\ell$.

It will also be convenient to define the sets
$$
\mathfrak{D}_t(\ell,m):=\left\{ M\in \Matlm (\Fq) : \rk (M) = t \right\},
$$
for $0 \le t \le \ell$ as well as their cardinalities $\mu_t(\ell,m):=|\mathfrak{D}_t(\ell,m)|$. The map that sends $M \in \mathfrak{D}_t(\ell,m)$ to its row-space is a surjection of
%the space of all $\ell\times m$ matrices of rank $j$
$\mathfrak{D}_t(\ell,m)$ onto the space $G_{t,m}$ of $t$-dimensional subspaces of $\Fq^m$. Moreover for a given  $W\in  G_{t,m}$, the number of $M\in \mathfrak{D}_t(\ell,m)$ with row-space $W$ is equal to the number of $\ell\times t$ matrices over $\Fq$ of rank $t$ or equivalently, the number of $t$-tuples of linearly independent %(column)
vectors in $\Fq^{\ell}$. Since $|G_{j,m}|$ is %given by
the Gaussian binomial coefficient ${{m} \brack{j}}_q$,
we find %it follows that
\begin{equation}
\label{mur}
{\mu}_t(\ell,m) =
|\mathfrak{D}_t(\ell,m)| = {{m} \brack{t}}_q \prod_{i=0}^{t-1} (q^{\ell} -q^i) = %q^{\frac{j(j-1)}{2}}
q^{\binom{t}{2}} \prod_{i=0}^{j-1} \frac{ \left( q^{\ell-i}-1\right) \left( q^{m-i}-1\right) }{q^{i+1}-1}.
\end{equation}
Using the Gaussian factorial $[n]_q!:=\prod_{i=1}^n(q^i-1)$, one can also give the following alternative expressions:
$${\mu}_t(\ell,m) =q^{\binom{t}{2}}{{m} \brack{t}}_q \dfrac{[\ell]_q!}{[\ell-t]_q!}=q^{\binom{t}{2}}\dfrac{[m]_q![\ell]_q!}{[m-t]_q![t]_q![\ell-t]_q!}=q^{\binom{t}{2}}{{\ell} \brack{t}}_q \dfrac{[m]_q!}{[m-t]_q!}.$$
Note that $\mu_0(\ell,m)=1$. Next we define $$\nu_t(\ell,m):=\sum_{s=0}^t \mu_s(\ell,m).$$
Since $\Dt(\ell,m)$ is the disjoint union of $\mathfrak D_0(\ell,m),\dots,\mathfrak D_t(\ell,m)$, we have
\begin{equation}
\label{mur2}
\nu_t(\ell,m)=\sum_{s=0}^t \mu_s(\ell,m)=|\Dt(\ell,m)|.
\end{equation}
The affine variety $\Dt(\ell,m)$ is, in fact, a cone; in other words, the vanishing ideal $\It$ (which is precisely the ideal of $\Fq[X]$ generated by all $(t+1) \times (t+1)$ minors of $X$) is a homogeneous ideal.
Also it is a classical (and nontrivial) fact that $\It$ is a prime ideal (see, e.g., \cite{survey}).
Thus $\Dt(\ell,m)$ can also be
viewed as a projective algebraic variety in $\Plm$, and viewed this way, we will denote it by $\Dtp(\ell,m)$. We remark that the dimension of $\Dtp(\ell,m)$ is $t(\ell + m -t) -1$ (cf. \cite{survey}). Moreover
\begin{eqnarray*}
|\Dtp(\ell,m)| & = & \dfrac{|\Dt(\ell,m)|-1}{q-1}=\dfrac{1}{q-1}\sum_{s=1}^t \mu_s(\ell,m).%\\
% & = &  \sum_{s=1}^t \hat{\mu}_s(\ell,m), \ \makebox{with} \ \hat{\mu}_s(\ell,m):=\dfrac{\mu_s(\ell,m)}{q-1}.
\end{eqnarray*}

We are now ready to define the codes we wish to study. Briefly put, the determinantal code $\Cpt$ is the linear code corresponding to the projective system
$\Dtp(\ell,m) \hookrightarrow \Plm (\Fq) = \PP(\Matlm)$. An essentially equivalent way to obtain this code is as follows: Denote $\hat{n}=|\Dtp(\ell,m)|$ and choose an ordering $P_1,\cdots, P_{\hat{n}}$ of the elements of $\Dtp(\ell,m)$. Further choose representatives $M_i \in \Matlm$ for $P_i$. Then consider the evaluation map
\begin{equation*}
\label{EvMap}
\Ev: \Fq[X]_1 \to \Fq^{\hat{n}} \quad \text{defined by} \quad \Ev(f) = \hat{c}_f:=  \left(f(M_1), \dots , f(M_{\hat{n}}) \right),
\end{equation*}
where $\Fq[X]_1$ denotes the space of homogeneous polynomials in $\Fq[X]$ of degree $1$ together with the zero polynomial. The image of this evaluation map can directly be identified with $\Cpt$. A different choice of representatives or a different ordering of these representatives gives in general a different code, but basic quantities like minimum distance, weight distribution, and generalized Hamming weights are independent on these choices.

In \cite{detcodes1}, also another code $\Ct$ was introduced. It can be obtained by evaluating functions in $\Fq[X]_1$ in all elements of $\Dt(\ell,m)$. The parameters of $\Ct$ determine those of $\Cpt$ and vice-versa, see \cite[Prop. 1]{detcodes1}. It is therefore sufficient to study either one of these two codes. In the remainder of this article, we will focus on $\Cpt$ and determine some of its basic parameters. It is in a sense also a more natural code to study, since the code $\Ct$ is degenerate, whereas $\Cpt$ is nondegenerate \cite{detcodes1}. We quote this and some other useful facts from \cite[Prop.1, Lem. 1, Cor. 1]{detcodes1}:

\begin{fact}
\label{fact:fromdetcodes1}
The code
$\Cpt$ is a nondegenerate code of dimension $\hat{k} =\ell m$ and length $\hat{n} = |\Dtp(\ell,m)|.$ For $f=\sum_{i=1}^\ell\sum_{j=1}^m f_{ij}X_{ij} \in \Fq[X]_1$, denote by $F=(f_{ij})$ the coefficient matrix of $f$. Then the Hamming weight of the corresponding
codeword $\hat{c}_f$ of $\Cpt$ depends only on $\rk(F)$. Consequently, if $r= \rk(F)$, then
$$\wH(\hat{c}_f) = \wH(\hat{c}_{\tau_r}), \quad \mbox{where} \quad \tau_r:= X_{11} + \cdots + X_{rr}.$$
As a result, the code $\Cpt$ has at most $\ell+1$ distinct weights, $\hat{w}_0(t;\ell,m),\dots, \hat{w}_{\ell}(t;\ell,m)$, given by %$w_0=0=\hat{w}_0$ while
$\hat{w}_r(t;\ell,m) = \wH(\hat{c}_{\tau_r})$ for $r=0,1,\dots , \ell$.
%Moreover, the weight enumerator polynomial $ \hat{A} (Z)$ of $\Cpt$ is given by
%$$
%\hat{A}(Z) =   (q-1)\sum_{r=0}^{\ell} \hat{\mu}_r(\ell, m) Z^{\hat{w}_r}.
%$$
\end{fact}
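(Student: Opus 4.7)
The central idea is that the group $\GL_\ell(\Fq) \times \GL_m(\Fq)$ acts on $\Matlm$ by $(A,B)\cdot M = AMB$, and this action preserves rank, hence restricts to a rank-preserving bijection on $\mathfrak{D}_{\le t}(\ell,m)\setminus\{0\}$ and thereby permutes the points of the projective variety $\Dtp(\ell,m)$. Writing $f(X)=\operatorname{tr}(F^T X)$, I would first record the identity
$$f(AMB) \;=\; \operatorname{tr}(F^T A M B) \;=\; \operatorname{tr}\bigl((A^T F B^T)^T M\bigr),$$
so that the pullback of $f$ under $M \mapsto AMB$ is precisely the linear form whose coefficient matrix is $A^T F B^T$. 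Since any two $\ell\times m$ matrices of the same rank lie in one orbit of the equivalence $F \mapsto A^T F B^T$, and since the distinguished rank-$r$ matrix $E_r$ with $1$'s in the positions $(1,1),\dots,(r,r)$ and $0$'s elsewhere has $\tau_r$ as its associated linear form, this reduces the weight claim to invariance under the group action.

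Next, I would translate the orbit action into a statement about codewords. For $P_i \in \Dtp(\ell,m)$ with representative $M_i$, there exist a permutation $\pi$ of $\{1,\dots,\hat n\}$ and scalars $\lambda_i \in \Fq^*$ with $A M_i B = \lambda_i M_{\pi(i)}$. If $g$ has coefficient matrix $A^T F B^T$, then the $i$-th entry of $\hat c_g$ equals $f(AM_i B) = \lambda_i f(M_{\pi(i)}) = \lambda_i (\hat c_f)_{\pi(i)}$. Both coordinate permutations and coordinate rescalings by nonzero scalars preserve Hamming weight, so $\wH(\hat c_f)=\wH(\hat c_g)$. Choosing $A,B$ so that $A^T F B^T = E_r$ (where $r=\rk F$) gives $\wH(\hat c_f) = \wH(\hat c_{\tau_r})$. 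Since $r$ ranges in $\{0,1,\dots,\ell\}$, the code has at most $\ell+1$ distinct weights.

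For $\hat k = \ell m$, I would show that $\Ev$ is injective: any nonzero $F$ has some entry $F_{ij}\ne 0$, and the rank-$1$ matrix with a single $1$ in position $(i,j)$ lies in $\Dtp(\ell,m)$ (since $t\ge 1$), so $f$ does not vanish there. Nondegeneracy is the dual observation: each nonzero matrix $M$ has some nonzero entry $M_{ij}$, and the linear form $X_{ij}$ does not vanish at $[M]$, so no coordinate of the code is identically zero. The only subtle point is the affine-versus-projective bookkeeping, namely keeping track of the scalars $\lambda_i$ that appear when identifying $AM_iB$ with a scalar multiple of the chosen representative $M_{\pi(i)}$; the argument would be routine were it not for this rescaling, but since Hamming weight is blind to multiplication of coordinates by units, this is not a genuine obstruction.
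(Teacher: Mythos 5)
Your proof is correct. Note that the paper itself offers no proof of this Fact — it is quoted verbatim from \cite{detcodes1} (Prop.~1, Lem.~1, Cor.~1) — and your argument via the rank-preserving action $(A,B)\cdot M = AMB$, the pullback identity $f(AMB)=\operatorname{tr}\bigl((A^TFB^T)^TM\bigr)$, and reduction of $F$ to the normal form $E_r$ is exactly the standard route taken there, with the projective rescaling by the $\lambda_i$ handled correctly.
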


We call the function $\tau_r$ the $r^{\rm th}$ partial trace. Note that $\hat{w}_0(t;\ell,m)=0$, since $\tau_0=0$. To determine the other weights $\hat{w}_r(t;\ell,m)$, one would need to count the number of $M \in \Matlm(\Fq)$ of rank at most $t$ with nonzero $r$-th partial trace. Delsarte \cite{D} used the theory of association schemes to solve an essentially equivalent problem of determining the number $\mathfrak{w}_r(t;\ell,m)$ of $M \in \mathfrak{D}_t(\ell,m)$ with $\tau_r(M)\ne 0$, and showed: % that
%N_t(r) = \frac{ (q-1 )\left( \mu_t - P_t(r) \right)}{q} \quad \text{where} \quad
%P_t(r) = \sum_{i=0}^{\ell} (-1)^{t-i} q^{im + \binom{k-i}{2}} {{m-i}\brack{m-t}}_q {{m-r}\brack{i}}_q.
\begin{equation*}
\mathfrak{w}_r(t;\ell,m) = \frac{ q-1 }{q} \left( \mu_t (\ell,m) -
 \sum_{i=0}^{\ell} (-1)^{t-i} q^{im + \binom{t - i}{2}} {{\ell -i}\brack{\ell -t}}_q {{\ell -r}\brack{i}}_q  \right).
\end{equation*}
The case $r=\ell=m$ was already dealt with by Buckhiester in \cite{buckhiester}. In the appendix of this paper, we obtain using different methods an alternative formula for $\mathfrak{w}_r(t;\ell,m)$, which may be of independent interest. For future use, we define $\hat{\mathfrak{w}}_r(t;\ell,m):={\mathfrak{w}}_r(t;\ell,m)/(q-1)$ for $0 \le r \le \ell$ and $1 \le t \le \ell$. From Delsarte's result it follows that
\begin{equation}\label{eq:delsarte}
\hat{\mathfrak{w}}_r(t;\ell,m) = \frac{ 1 }{q} \left( \mu_t (\ell,m) -
 \sum_{i=0}^{\ell} (-1)^{t-i} q^{im + \binom{t - i}{2}} {{\ell -i}\brack{\ell -t}}_q {{\ell -r}\brack{i}}_q  \right),
\end{equation}

In fact, Delsarte considered codes obtained by evaluating elements from $\Fq[X]_1$ in all $\ell \times m$ matrices of rank $t$. Therefore his code can be seen as the projection of $\Ct$ on the coordinates corresponding to matrices of rank $t$.

Using Equation \eqref{eq:delsarte}, we see that the nonzero weights of $\Cpt$ are given by
\begin{eqnarray}\label{eq:weights}
\hat{w}_r(t;\ell,m) & = &  \sum_{s=1}^t \hat{\mathfrak{w}}_r(s;\ell,m)\\
 & = &\sum_{s=1}^t\frac{ 1 }{q} \left( \mu_t (\ell,m) -
 \sum_{i=0}^{\ell} (-1)^{t-i} q^{im + \binom{t - i}{2}} {{\ell -i}\brack{\ell -t}}_q {{\ell -r}\brack{i}}_q  \right),\notag
\end{eqnarray}
for $r=1, \dots , \ell$. However, for a fixed $t$, it is not obvious how $\hat{w}_1(t;\ell,m), \dots , \hat{w}_{\ell}(t;\ell,m)$ are ordered or even which among them is the least. We will formulate a conjecture based (among others) on the following examples.

\begin{example}\label{ex:t=1}
If $t=0$ the code $C_{\det}(t;\ell,m)$ is trivial (containing only the zero word), while the code $\widehat{C}_{\det}(t;\ell,m)$ is not defined. Therefore the easiest nontrivial case occurs for $t=1$. This case was considered in \cite{detcodes1}, where is was shown that $$\hat{w_r}(1;\ell,m)=q^{\ell + m -2} + q^{\ell + m-3} + \dots +  q^{\ell + m -r-1}=q^{\ell + m -r -1} \frac{(q^r-1)}{q-1}.$$ These formulae also follow fairly directly from Equations \eqref{eq:delsarte} and \eqref{eq:weights}. It follows directly that $\hat{w}_1(1;\ell,m) < \hat{w}_2(1;\ell,m) < \dots  < \hat{w}_{\ell}(1;\ell,m)$ and that $\hat{w}_1(1;\ell,m)=q^{\ell + m -2}$ is the minimum distance of $\Cpone$.
\end{example}

\begin{example}\label{ex:t45}
In this example we consider the determinantal code $\widehat{C}_{\det}(t;4,5)$ in case $q=2$ and $1 \le t \le 5$. %Note that for $q=2$ the codes $C_{\det}(t;\ell,m)$ and $\widehat{C}_{\det}(t;\ell,m)$ are identical for any value of $t$, $\ell$ and $m$.
Using the formulae in Equations \eqref{eq:delsarte} and \eqref{eq:weights}, we find the following table:
$$
\begin{array}{|c|c|c|c|c|}
\hline
r & 1& 2 & 3 & 4 \\
\hline
\hat{w}_r(1;\ell,m) & 128 & 192 & 224 & 240\\
%\hline
%\hat{w}_r(1;\ell,m)-\hat{\mu}_t(\ell,m) & -105 & -41 & -9 & 7\\
%\hline
\hline
\hat{w}_r(2;\ell,m) & 13568 & 16256 & 16576 & 16480\\
\hline
%\hat{w}_r(2;\ell,m)-\hat{\mu}_t(\ell,m) & -2940 & -252 & 68 & -28\\
%\hline
%\hline
\hat{w}_r(3;\ell,m) & 201728 & 212480 & 211712 & 211840\\
\hline
%\hat{w}_r(3;\ell,m)-\hat{\mu}_t(\ell,m) & -10080 & 672 & -96 & 32\\
%\hline
%\hline
\hat{w}_r(4;\ell,m) & 524288 & 524288 & 524288 & 524288\\
\hline
%\hat{w}_r(4;\ell,m)-\hat{\mu}_t(\ell,m) & 0 & 0 & 0 & 0\\
%\hline
\end{array}
$$

One sees that it is not true in general that $\hat{w}_r(t;\ell,m) < \hat{w}_s(t;\ell,m)$ whenever $r<s$. However, in this example it is true that for a given $t$, the weight $\hat{w}_1(t;\ell,m)$ is the smallest among all nonzero weights $\hat{w}_r(t;\ell,m)$.
\end{example}

\begin{example}\label{ex:t=ell}
In case $t=\ell$ in the previous example, all weights $\hat{w}_1,\dots,\hat{w}_{\ell}$ were the same. This holds in general: If $t=\ell$, then $\Dtp = \PP^{\ell m-1}$ and $\Cpt$ is a first order projective Reed-Muller code (cf. \cite{L}). All nonzero codewords in this code therefore have weight $q^{\ell m -1}$. Note that combining this with Equations \eqref{eq:delsarte} and \eqref{eq:weights} we obtain for $1 \le r \le \ell$ the following identity
$$q^{\ell m-1}=\sum_{s=1}^{\ell}\frac{ 1}{q} \left( \mu_s (\ell,m) -
 \sum_{i=0}^{\ell} (-1)^{s-i} q^{im + \binom{s - i}{2}} {{\ell -i}\brack{\ell -s}}_q {{\ell -r}\brack{i}}_q  \right).$$
Using Equation \eqref{mur2} with $t=\ell$, we see that for $1 \le r \le \ell$ apparently the following identity holds
$$\sum_{s=1}^\ell\sum_{i=0}^{\ell} (-1)^{s-i} q^{im + \binom{s - i}{2}} {{\ell -i}\brack{\ell -s}}_q {{\ell -r}\brack{i}}_q=-1.$$
This identity may readily be shown using for example \cite[Thm 3.3]{andrews} after interchanging the summation order. In any case, it is clear that Equations \eqref{eq:delsarte} and \eqref{eq:weights} may not always give the easiest possible expression for the weights.
\end{example}

While for $t=1$ and $t=\ell$ all weights $\hat{w}_r(t;\ell,m)$ are easy to compare with one another, the same cannot be said in case $1 < t < \ell$. We formulate the following conjecture.
\begin{conjecture}\label{conj:weights}
Let $\ell \le m$ be positive integers and $t$ an integer satisfying $1 < t < \ell$. The the following hold:
\begin{enumerate}
\item All weights $\hat{w}_1(t;\ell,m),\dots,\hat{w}_\ell(t;\ell,m)$ are mutually distinct.
\item We have $\hat{w}_1(t;\ell,m) < \hat{w}_2(t;\ell,m) < \cdots < \hat{w}_{\ell-t+1}(t;\ell,m)$.
\item For all $\ell-t+2 \le r \le \ell$, the weight $\hat{w}_r(t;\ell,m)$ lies between $\hat{w}_{r-2}(t;\ell,m)$ and $\hat{w}_{r-1}(t;\ell,m)$.
\end{enumerate}
\end{conjecture}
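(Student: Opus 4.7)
The plan is first to collapse the double sum in \eqref{eq:weights} to a single sum indexed only by $i$, and then to analyze the consecutive differences $\hat{w}_r(t;\ell,m) - \hat{w}_{r-1}(t;\ell,m)$ through a clean Gaussian-binomial identity.

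After swapping the order of summation in \eqref{eq:weights} and substituting $u = s - i$ (so that ${{\ell-i}\brack{\ell-s}}_q = {{\ell-i}\brack{u}}_q$), the inner sum in $s$ becomes $\sum_{u=0}^{t-i}(-1)^u q^{\binom{u}{2}}{{\ell-i}\brack{u}}_q$. This truncated $q$-binomial series collapses via the identity
$$\sum_{u=0}^{T} (-1)^u q^{\binom{u}{2}} {{N}\brack{u}}_q \;=\; (-1)^T q^{\binom{T+1}{2}} {{N-1}\brack{T}}_q,$$
a direct consequence of iterating the $q$-Pascal rule. This yields the compact formula
$$\hat{w}_r(t;\ell,m) \;=\; \frac{\nu_t(\ell,m)}{q} + \frac{1}{q}\sum_{i=0}^{t}(-1)^{t-i+1}\, q^{im+\binom{t-i+1}{2}}\,{{\ell-i-1}\brack{t-i}}_q\,{{\ell-r}\brack{i}}_q,$$
in which the $r$-dependence is isolated in the single factor ${{\ell-r}\brack{i}}_q$. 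Applying ${{n+1}\brack{i}}_q - {{n}\brack{i}}_q = q^{n+1-i}{{n}\brack{i-1}}_q$ and reindexing by $k = t-i$ then gives
$$\hat{w}_r(t;\ell,m) - \hat{w}_{r-1}(t;\ell,m) \;=\; q^{\ell+m-r-1}\sum_{k=0}^{t-1}(-1)^k q^{(m-1)(t-1-k)+\binom{k+1}{2}}\,{{\ell-t-1+k}\brack{k}}_q\,{{\ell-r}\brack{t-1-k}}_q.$$
Since ${{\ell-r}\brack{t-1-k}}_q$ vanishes whenever $k < t+r-\ell-1$, the effective sum runs only over $k_0(r) \le k \le t-1$, where $k_0(r):=\max(0,\,t+r-\ell-1)$. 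This threshold $k_0$ jumps from $0$ to $1$ precisely at $r=\ell-t+2$, which is the transition appearing in the conjecture.

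Part (2) then reduces to showing that, for $r\le\ell-t+1$, the alternating sum is dominated by its leading term $q^{(m-1)(t-1)}{{\ell-r}\brack{t-1}}_q > 0$. Writing $T_k$ for the absolute value of the $k$-th summand, the ratio of consecutive magnitudes admits the closed form $T_{k+1}/T_k = q^{k+2-m}(q^{\ell-t+k}-1)(q^{t-1-k}-1)/\bigl[(q^{k+1}-1)(q^{\ell-r-t+2+k}-1)\bigr]$, and a pairing argument grouping $(T_{2j},T_{2j+1})$ should secure strict positivity of the partial sums. Part (3) follows from the same formula applied with $k_0\ge 1$: the leading term then has sign $(-1)^{k_0}=(-1)^{t+r-\ell-1}$, giving the alternation of signs of successive differences, while the ``lies between'' statement is equivalent to the magnitude decay $|\hat{w}_r-\hat{w}_{r-1}| < |\hat{w}_{r-1}-\hat{w}_{r-2}|$, which follows from the loss of a full power of $q$ in the prefactor $q^{\ell+m-r-1}$ together with the shrinking summation range as $r$ increases. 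Part (1) is immediate once (2) and (3) are in hand: the weights $\hat{w}_r$ for $r\ge\ell-t+2$ nest into strictly shrinking open subintervals of $(\hat{w}_{\ell-t},\hat{w}_{\ell-t+1})$, and so all $\ell$ weights are mutually distinct.

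The principal obstacle is the uniform control of these alternating sums across all parameter regimes. When $r+t$ is large compared to $\ell+m$, the pointwise ratio $T_{k+1}/T_k$ can exceed $1$ for small $k$, so a crude term-by-term estimate will not close the argument; one will likely need either a further $q$-hypergeometric simplification of the inner sum into a product, or a direct combinatorial interpretation of $\hat{w}_r-\hat{w}_{r-1}$ as a signed count of matrix configurations (perhaps exploiting the $r\leftrightarrow s$ reciprocity coming from the symmetry $\mu_s\,\hat{\mathfrak{w}}_r(s;\ell,m) = \mu_r\,\hat{\mathfrak{w}}_s(r;\ell,m)$), in order to handle all cases simultaneously.
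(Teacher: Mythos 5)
First, note that the statement you are proving is labeled a \emph{Conjecture} in the paper: the authors do not prove it, and their own partial progress (Theorem \ref{thm:mindist} and Corollary \ref{cor:difference}) only establishes that $\hat{w}_1(t;\ell,m)$ is the strict minimum when $t<\ell$, via the recursion $\hat{w}_r-\hat{w}_s=q^t\bigl(\hat{\mathfrak{w}}_{r-1}(t;\ell-1,m-1)-\hat{\mathfrak{w}}_{s-1}(t;\ell-1,m-1)\bigr)$. So the bar for your attempt is whether it actually closes the full conjecture, and it does not. The reduction you perform is sound as far as it goes: the truncated $q$-binomial identity $\sum_{u=0}^{T}(-1)^u q^{\binom{u}{2}}{{N}\brack{u}}_q=(-1)^Tq^{\binom{T+1}{2}}{{N-1}\brack{T}}_q$ is correct, the resulting single-sum formula for $\hat{w}_r(t;\ell,m)$ checks out (after accounting for the missing $u=0$, $i=0$ term, which exactly converts $\sum_{s=1}^t\mu_s$ into $\nu_t$), and I verified that your expression for $\hat{w}_r-\hat{w}_{r-1}$ agrees exponent-by-exponent with what the $q$-Pascal rule gives. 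The identification of the threshold $k_0(r)=\max(0,t+r-\ell-1)$ and its jump at $r=\ell-t+2$ is a genuinely nice observation that explains where the phase transition in the conjecture comes from.

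However, the entire mathematical content of the conjecture is the sign and magnitude control of the alternating sums you arrive at, and that is exactly the part you do not prove. For part (2) you say a pairing of $(T_{2j},T_{2j+1})$ ``should secure strict positivity,'' and you then concede in your final paragraph that the ratio $T_{k+1}/T_k$ can exceed $1$ for small $k$ in some parameter regimes, so the pairing argument as stated does not go through uniformly. For part (3), the claim that the sum has the sign of its leading term $(-1)^{k_0}$ is asserted, not proved, and the ``lies between'' statement requires \emph{both} sign alternation of consecutive differences \emph{and} the strict magnitude inequality $|\hat{w}_r-\hat{w}_{r-1}|<|\hat{w}_{r-1}-\hat{w}_{r-2}|$; the latter cannot be read off from ``the loss of a full power of $q$ in the prefactor,'' because the summands themselves depend on $r$ through ${{\ell-r}\brack{t-1-k}}_q$ and the summation range changes with $r$. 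Part (1) also needs the strictness in part (3) (distinctness of $\hat{w}_r$ from both endpoints), which again rests on the unproved sign claims. In short: you have a correct and potentially useful reformulation of the conjecture as a family of inequalities for alternating $q$-binomial sums, but no proof of those inequalities, so the conjecture remains open after your argument.
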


\section{Minimum distance of determinantal codes}

Recall that in general for a linear code $C$ of length $n$, i.e., for a linear subspace $C$ of $\Fq^n$,
the \emph{Hamming weight} of a codeword $c=(c_1, \dots , c_n)$, denoted $\wH(c)$ is defined by
$$
\wH(c) : =  |\{i: c_i\ne 0\}|.
$$
The minimum distance of $C$, denoted $d(C)$, is defined by
\begin{eqnarray*}
d(C) &:= & \min\{\wH(c) : c\in C, \; c\ne 0\}.
\end{eqnarray*}
A consequence of Conjecture \ref{conj:weights} would also be that $\hat{w}_1(t;\ell,m)$ is the minimum distance of $\Cpt$. We will now show that this is indeed the case. We start by giving a rather compact expression for $\hat{w}_1(t;\ell,m)$.

\begin{proposition}\label{prop:w1hat}
Let $t,\ell$, and $m$ be integers satisfying $1 \le t \le \ell \le m$. Then
$$\hat{w}_1(t;\ell,m)=q^{\ell+m-2}\nu_{t-1}(\ell-1,m-1).$$%|\mathcal{D}_{t-1}(\ell-1,m-1)|.$$
\end{proposition}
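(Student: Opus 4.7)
The plan is to compute $\hat{w}_1(t;\ell,m)$ directly from its description as the Hamming weight of the evaluation of $\tau_1 = X_{11}$, bypassing Delsarte's formula and equation \eqref{eq:weights}. By Fact \ref{fact:fromdetcodes1}, $\hat{w}_1(t;\ell,m)$ equals the number of projective points $[M] \in \Dtp(\ell,m)$ with $M_{11} \neq 0$. Since every matrix $M$ with $M_{11} \neq 0$ is automatically nonzero and its $q-1$ nonzero scalar multiples all satisfy the same condition, I would first rewrite
$$\hat{w}_1(t;\ell,m) = \frac{1}{q-1}\,\bigl|\{M \in \Dt(\ell,m) : M_{11} \neq 0\}\bigr|.$$

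The main step is a block-reduction bijection. Writing a matrix with $M_{11}=a\neq 0$ in block form as
$$M = \begin{pmatrix} a & u^T \\ v & N \end{pmatrix}, \quad u \in \Fq^{m-1},\ v \in \Fq^{\ell-1},\ N \in \Mat_{(\ell-1)\times(m-1)}(\Fq),$$
I would perform row and column operations (subtracting $a^{-1}v$ times the first row from the remaining rows, and analogously for columns) to bring $M$ to the block-diagonal form $\operatorname{diag}(a,\,N - a^{-1}vu^T)$. These operations do not change the rank, so $\rk(M) = 1 + \rk(N - a^{-1}vu^T)$. Consequently, $M \in \Dt(\ell,m)$ if and only if $N - a^{-1}vu^T \in \Det_{t-1}(\ell-1,m-1)$.

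For fixed $a \neq 0$, $u$, and $v$, the map $N \mapsto N - a^{-1}vu^T$ is a bijection of $\Mat_{(\ell-1)\times(m-1)}(\Fq)$ to itself, so the number of admissible $N$ equals $|\Det_{t-1}(\ell-1,m-1)| = \nu_{t-1}(\ell-1,m-1)$ by equation \eqref{mur2}. Summing over the $(q-1)$ choices for $a$, the $q^{m-1}$ choices for $u$, and the $q^{\ell-1}$ choices for $v$, I get
$$\bigl|\{M \in \Dt(\ell,m) : M_{11}\neq 0\}\bigr| = (q-1)\,q^{\ell+m-2}\,\nu_{t-1}(\ell-1,m-1).$$
Dividing by $q-1$ yields the proposition.

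There is no real obstacle here; the only delicate point is correctly tracking the two uses of $q-1$ (one from the $a$ count, one from projectivization), and verifying that the change of variables $N \mapsto N - a^{-1}vu^T$ truly preserves the rank of $M$, which follows since pre- and post-multiplication by the elementary block matrices $\bigl(\begin{smallmatrix}1 & 0 \\ -a^{-1}v & I\end{smallmatrix}\bigr)$ and $\bigl(\begin{smallmatrix}1 & -a^{-1}u^T \\ 0 & I\end{smallmatrix}\bigr)$ are invertible.
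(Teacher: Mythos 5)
Your proof is correct and rests on essentially the same idea as the paper's: Gaussian elimination on the first row and column using the nonzero pivot $M_{11}=a$, which identifies the matrices of rank at most $t$ having $M_{11}\neq 0$ with triples $(a,u,v)$ together with an $(\ell-1)\times(m-1)$ matrix of rank at most $t-1$. The only difference is bookkeeping: the paper applies this reduction one rank stratum $\mathfrak{D}_s(\ell,m)$ at a time (hence needs Equation \eqref{eq:weights} to sum the contributions and treats $t=1$ as a separate base case), whereas you count all of $\Dt(\ell,m)$ at once and land directly on $\nu_{t-1}(\ell-1,m-1)$, which is marginally cleaner.
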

\begin{proof}
First suppose that $t=1$. In this case Example \ref{ex:t=1} implies that $\hat{w}_1(1;\ell,m)=q^{m+\ell-2}$. On the other hand, using Equations \eqref{mur} and \eqref{mur2}, we see that $|\mathcal{D}_{t-1}(\ell-1,m-1)|=\mu_{0}(\ell-1,m-1)=1$, so the proposition holds for $t=1$.

From now on we assume that $t>1$ (implying that also $\ell >1$). We will show that
\begin{equation}\label{eq:wfrak1}
\hat{\mathfrak{w}}_1(t;\ell,m)=q^{\ell+m-2}\mu_{t-1}(\ell-1,m-1).
\end{equation}
Once we have shown this, the proposition follows using Equations \eqref{mur2} and \eqref{eq:weights}. Let $M = (m_{ij}) \in \Dt(\ell,m)$ and suppose that $\tau_1(M)=m_{11} \neq 0$. In that case, we may find uniquely determined square matrices
$$ A = \left(
\begin{array}{cccc}
1 & & &  \bf 0\\
a_1 & 1 & &  \\
\vdots & & \ddots & \\
a_{\ell-1} & \bf 0 & & 1
\end{array}
\right)
\ \makebox{and} \
B = \left(
\begin{array}{cccc}
1 & b_1 & \cdots  & b_{m-1} \\
   & 1 & &  \bf 0\\
 & & \ddots & \\
\bf 0 &   & & 1
\end{array}
\right),$$
such that
\begin{equation}\label{eq:mtilde}
AMB=\left(
\begin{array}{cccc}
m_{11} & 0 & \cdots  & 0 \\
0 &  & &  \\
0 & & \widetilde{M} & \\
0 &   & &
\end{array}
\right).
\end{equation}
The matrices $A$ and $B$ are indeed uniquely determined, since for $2 \le i \le \ell$ and $2 \le j \le m$ we have
$$0=(AMB)_{i1}=a_{i-1}(MB)_{11}+(MB)_{i1}=a_{i-1}m_{11}+m_{i1}$$
and
$$0=(AMB)_{1j}=(AM)_{11}b_{j-1}+(AM)_{1j}=m_{11}b_{j-1}+m_{1j}.$$
These equations determine the values of $a_1,\dots,b_{m}$ given the matrix $M$. The association of $\phi(M)=\widetilde{M}$ therefore is a well-defined map $$\phi: \{M \in \mathfrak{D}_t(\ell,m) \,|\, m_{11}\neq 0\} \rightarrow \mathfrak{D}_{t-1}(\ell-1,m-1).$$ The map $\phi$ is clearly surjective (one can for example choose $M$ as in the right hand side of Equation \eqref{eq:mtilde}), while the preimage of any matrix $\widetilde{M} \in \mathfrak{D}_{t-1}(\ell-1,m-1)$ consist of the $(q-1)q^{\ell+m-2}$ matrices of the form $A^{-1}MB^{-1}$, with $A$ and $B$ as above and again $M$ chosen as in the right-hand-side of Equation \eqref{eq:mtilde}. Equation \eqref{eq:wfrak1} (and hence the proposition) then follows, since
\begin{eqnarray*}
\mathfrak{w}_1(t;\ell,m) & = & |\{M \in \mathfrak{D}_t(\ell,m) \,|\, m_{11}\neq 0\}| \\
 & = & \sum_{\widetilde{M} \in \mathfrak{D}_{t-1}}|\phi^{-1}(\widetilde{M})|=|\mathfrak{D}_{t-1}(\ell-1,m-1)|(q-1)q^{\ell+m-2}.
\end{eqnarray*}
Equation \eqref{eq:wfrak1}, and hence the proposition, follows directly from this.
\end{proof}
Note that the expression for $\mathfrak{w}_1(t;\ell,m)$ from Equation \eqref{eq:delsarte} is considerably more involved that the expression obtained in the proof of Proposition \ref{prop:w1hat}.
%Comparing Equations \eqref{eq:delsarte} and \eqref{eq:wfrak1} yields an equation involving Gaussian binomials.
%$$q^{\ell+m-1}\mu_{t-1}(\ell-1,m-1)=\left( \mu_t (\ell,m) - \sum_{i=0}^{\ell} (-1)^{t-i} q^{im + \binom{t - i}{2}} {{\ell -i}\brack{\ell -t}}_q {{\ell -1}\brack{i}}_q  \right).$$
We now turn our attention to proving that $\hat{w}_1(t;\ell,m)$ actually is the minimum distance of the code $\Cpt$. The proof involves several identities concerning $\mathfrak{\hat{w}}(t;\ell,m)$ and $\hat{w}(t;\ell,m)$. The key is the following theorem in which the following quantity occurs:
$$A(r,t):=q^t \mathfrak{\hat{w}}_{r-1}(t;\ell-1,m-1)+q^{t-1}\left(\,\mu_t(\ell-1, m)-\mu_t(\ell-1,m-1)\,\right), \ \makebox{for $0 \le t < \ell$}.$$

\begin{theorem}\label{thm:keyrec}
Let $1 \le r \le \ell \le m$ and $1 \le t < \ell$, then
$$
\mathfrak{\hat{w}}_r(t;\ell,m)  =  A(r,t)-A(r,t-1)+ q^{m-1}\mu_{t-1}(\ell-1,m).
$$
\end{theorem}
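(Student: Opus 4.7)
My plan is to prove the recursion combinatorially, by partitioning $\mathfrak{D}_t(\ell,m)$ according to a block decomposition around the $(1,1)$ entry. Write each $M\in\Matlm$ as
$$
M=\begin{pmatrix} m_{11} & R \\ C & M^\circ \end{pmatrix},
$$
where $R\in\Fq^{m-1}$, $C\in\Fq^{\ell-1}$ and $M^\circ$ is an $(\ell-1)\times(m-1)$ matrix. The crucial identity $\tau_r(M)=m_{11}+\tau_{r-1}(M^\circ)$ shows that $\tau_r(M)\neq 0$ is equivalent to $m_{11}\neq v$, where $v:=-\tau_{r-1}(M^\circ)$ depends only on $M^\circ$. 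I will count $\mathfrak{w}_r(t;\ell,m)=(q-1)\mathfrak{\hat{w}}_r(t;\ell,m)$ by stratifying these $M$ by (i)~whether $C=0$ or $C\neq 0$, and (ii)~the value $s:=\rk(M^\circ)\in\{t-2,t-1,t\}$, which is the only range compatible with $\rk(M)=t$.

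The key structural observation is this: letting $T:=\operatorname{rowspan}([C\mid M^\circ])\subseteq\Fq^m$ and $\pi\colon T\to\Fq$ denote the first-coordinate projection, one has $\pi\equiv 0$ when $C=0$ and $\pi$ surjective when $C\neq 0$. Hence in the case $C\neq 0$ the slice $\{(m_{11},R)\in T:m_{11}=v\}$ has size $q^{\dim T-1}$ regardless of $v$, so the $\tau_r$-constraint completely decouples from $\tau_{r-1}(M^\circ)$. Summing over the three subcases $s\in\{t,t-1,t-2\}$ (each further split by whether $C\in\operatorname{colspan}(M^\circ)$), the $C\neq 0$ stratum contributes only $\mu_s(\ell-1,m-1)$ terms. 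By contrast, when $C=0$ every element of $T$ has first coordinate zero, so the $\tau_r$-constraint does interact with $\tau_{r-1}(M^\circ)$; only $s\in\{t,t-1\}$ is compatible with $\rk(M)=t$, and a direct count yields the $C=0$ contribution
$$
q^t\,\mathfrak{\hat{w}}_{r-1}(t;\ell-1,m-1)-q^{t-1}\mathfrak{\hat{w}}_{r-1}(t-1;\ell-1,m-1)+q^{m-1}\mu_{t-1}(\ell-1,m-1).
$$

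To assemble the final formula I apply the standard identity $\mu_s(\ell-1,m)=q^s\mu_s(\ell-1,m-1)+(q^{\ell-1}-q^{s-1})\mu_{s-1}(\ell-1,m-1)$ to rewrite the $C\neq 0$ contribution in the form
$$
q^{t-1}\bigl[\mu_t(\ell-1,m)-\mu_t(\ell-1,m-1)\bigr]+(q^{m-1}-q^{t-2})\bigl[\mu_{t-1}(\ell-1,m)-\mu_{t-1}(\ell-1,m-1)\bigr].
$$
Adding the $C=0$ and $C\neq 0$ contributions and collapsing the $\mu_{t-1}(\ell-1,m-1)$ terms via $\mu_{t-1}(\ell-1,m-1)+[\mu_{t-1}(\ell-1,m)-\mu_{t-1}(\ell-1,m-1)]=\mu_{t-1}(\ell-1,m)$ produces exactly $A(r,t)-A(r,t-1)+q^{m-1}\mu_{t-1}(\ell-1,m)$. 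The main obstacle is the combinatorial bookkeeping across the roughly five subcases together with the final algebraic rearrangement; what keeps the answer compact is the independence-from-$v$ phenomenon in the $C\neq 0$ stratum, which is precisely what prevents any $\mathfrak{\hat{w}}_{r-1}(s;\ell-1,m-1)$ terms with $s\in\{t,t-2\}$ from leaking into the final formula and thereby forces the telescoping shape $A(r,t)-A(r,t-1)$.
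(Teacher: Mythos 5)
Your argument is correct and is essentially the paper's own proof: the paper deletes the $r$-th row and conditions on whether the $r$-th column of the truncated matrix is zero, which yields the same four-case analysis (column zero/nonzero $\times$ rank $t$/$t-1$), the same extension counts $q^t$ and $q^m-q^{t-1}$, and the same equidistribution argument showing that the nonzero-column strata contribute only $\mu$-terms. Your version merely isolates the $(1,1)$ entry instead of the $(r,r)$ entry and makes the passage to $(\ell-1)\times(m-1)$ matrices explicit via the block decomposition; the bookkeeping and the final telescoping into $A(r,t)-A(r,t-1)$ are identical.
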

\begin{proof}
Given a matrix $M=(m_{ij})\in \mathfrak D_t(\ell,m)$, we denote by $\psi(M)$ the matrix obtained from $M$ by deleting its $r$-th row. Since either $\psi(M) \in \mathfrak D_t(\ell-1,m)$ or $\psi(M) \in \mathfrak D_{t-1}(\ell-1,m)$, this defines a map $\psi:\mathfrak D_t(\ell,m) \to \mathfrak D_t(\ell-1,m)\bigsqcup \mathfrak D_{t-1}(\ell-1,m)$. It is not hard to see that $\psi$ is surjective. In fact:
\begin{equation}\label{eq:cardinality preimage}
 |\psi^{-1}(N)|=\left\{
\begin{array}{ll}
q^t           & \makebox{if $N\in \mathfrak D_t(\ell-1,m)$}\\
q^{m}-q^{t-1} & \makebox{if $N\in \mathfrak D_{t-1}(\ell-1,m)$}
\end{array}
\right.,
\end{equation}
since if $N\in \mathfrak D_t(\ell-1,m)$ we obtain all elements of $\psi^{-1}(N)$ by adding a row from the rowspace of $N$, while if $N\in \mathfrak D_{t-1}(\ell-1,m)$ we obtain all elements of $\psi^{-1}(N)$ by adding any row not from the rowspace of $N$.

We will now prove the theorem by carefully counting the number of matrices $M \in \mathfrak D_t(\ell,m)$ such that $\tau_r(M) \neq 0$, thus computing $\mathfrak w_r(t;\ell,m)$. The theorem then follows easily, since $\mathfrak w_r(t;\ell,m)=(q-1)\mathfrak{\hat{w}}_r(t;\ell,m)$. We distinguish four cases:

\begin{enumerate}
\item[Case 1:] The $r$-th column of $\psi(M)$ is zero and $\psi(M)$ has rank $t$,
\item[Case 2:] The $r$-th column of $\psi(M)$ is zero and $\psi(M)$ has rank $t-1$,
\item[Case 3:] The $r$-th column of $\psi(M)$ is non zero and $\psi(M)$ has rank $t$,
\item[Case 4:] The $r$-th column of $\psi(M)$ is non zero and $\psi(M)$ has rank $t-1$.
\end{enumerate}

Case 1: The $r$-th column of $\psi(M)$ is zero and $\psi(M)$ has rank $t$. In this case $m_{rr}=0$, since otherwise $\rk(\psi(M))\neq \rk(M)$. Therefore $\tau_r(M) \neq 0$ if and only if $\tau_{r-1}(\psi(M)) \neq 0$. By Equation \eqref{eq:cardinality preimage}, we find the following contribution to $\mathfrak w_r(t;\ell,m)$:
\begin{equation}\label{eq:case1}
q^t \mathfrak w_{r-1}(t;\ell-1,m-1).
\end{equation}

Case 2: The $r$-th column of $\psi(M)$ is zero and $\psi(M)$ has rank $t-1$. If $m_{rr}=0$, then by a similar reasoning as in case 1, we find a contribution to $\mathfrak w_r(t;\ell,m)$ of magnitude
\begin{equation}\label{eq:case2a}
(q^{m-1}-q^{t-1})\mathfrak w_{r-1}(t-1;\ell-1,m-1).
\end{equation}
If $m_{rr}\neq 0$, the situation is more complicated. If namely $\tau_{r-1}(\psi(M))=0$, then $\tau_r(M)\neq 0$ if and only if $m_{rr}\neq 0$. Since $\rk(\psi(M))=t-1$ and the $r$-th column of $\psi(M)$ is zero, all $q^{m-1}(q-1)$ matrices with nonzero $(r,r)$-th entry are in $\psi^{-1}(\psi(M))$. This gives a contribution to $\mathfrak w_r(t;\ell,m)$ of magnitude
\begin{equation}\label{eq:case2b}
q^{m-1}(q-1)\left(\mu_{t-1}(\ell-1,m-1)- \mathfrak w_{r-1}(t-1;\ell-1,m-1)\right).
\end{equation}
If on the other hand $\tau_{r-1}(\psi(M))\neq 0$, then $\tau_r(M) \neq 0$ if and only if $m_{rr} \neq \tau_{r-1}(\psi(M))$. Since we already assumed that $m_{rr} \neq 0$, we find a contribution to $\mathfrak w_r(t;\ell,m)$ of magnitude
\begin{equation}\label{eq:case2c}
q^{m-1}(q-2)\mathfrak w_{r-1}(t-1;\ell-1,m-1).
\end{equation}

Case 3: The $r$-th column of $\psi(M)$ is non zero and $\psi(M)$ has rank $t$. Since the $r$-th column of $\phi(M)$ is non zero, the $r$-th coordinates of elements from the row space of $\psi(M)$ are distributed evenly over the elements of $\Fq$. This implies that regardless of the value of $\tau_{r-1}(\psi(M))$, a $(q-1)/q$-th fraction of the matrices in $\psi^{-1}(\psi(M))$ contribute to $\mathfrak w_r(t;\ell,m)$. In total we find the contribution:
\begin{equation}\label{eq:case3}
q^{t-1}(q-1)\left(\mu_t(\ell-1,m)-\mu_t(\ell-1,m-1)\right).
\end{equation}

Case 4: The $r$-th column of $\psi(M)$ is non zero and $\psi(M)$ has rank $t-1$. Just as in case 3, since the $r$-th column of $\psi(M)$ is non zero, the $r$-th coordinates of elements from the row space of $\psi(M)$ are distributed evenly over the elements of $\Fq$. Therefore also the $r$-th coordinates of elements not from the row space of $\psi(M)$ are distributed evenly over the elements of $\Fq$. By a similar reasoning as in case 3, we find a contribution to $\mathfrak w_r(t;\ell,m)$ of magnitude:
\begin{equation}\label{eq:case4}
(q^{m-1}-q^{t-2})(q-1)\left(\mu_{t-1}(\ell-1,m)-\mu_{t-1}(\ell-1,m-1)\right).
\end{equation}
Adding all contributions to $\mathfrak{w}_r(t;\ell,m)$ from Equations \eqref{eq:case1},\eqref{eq:case2a},\eqref{eq:case2b},\eqref{eq:case2c},\eqref{eq:case3}, and \eqref{eq:case4}, the theorem follows.
\end{proof}

\begin{corollary}
Let $1 \le r \le \ell \le m$ and $1 \le t < \ell$. Then
$$\hat{w}_r(t;\ell,m)=A(r,t)+q^{m-1}\nu_{t-1}(\ell-1,m-1).$$%|\mathcal{D}_{t-1}(\ell-1,m)|.$$
\end{corollary}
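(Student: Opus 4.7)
The plan is to deduce the closed form for $\hat{w}_r(t;\ell,m)$ from Theorem \ref{thm:keyrec} by a straightforward telescoping argument, using the fact that, by \eqref{eq:weights}, $\hat{w}_r(t;\ell,m)$ is the partial sum of $\hat{\mathfrak{w}}_r(s;\ell,m)$ for $s=1,\dots,t$.

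Concretely, I would proceed as follows. First, I would write
\begin{equation*}
\hat{w}_r(t;\ell,m) \;=\; \sum_{s=1}^t \hat{\mathfrak{w}}_r(s;\ell,m),
\end{equation*}
which is valid because $1 \le t < \ell$, so that Theorem \ref{thm:keyrec} applies to each summand. Substituting the recursion
\begin{equation*}
\hat{\mathfrak{w}}_r(s;\ell,m) \;=\; A(r,s) - A(r,s-1) + q^{m-1}\mu_{s-1}(\ell-1,m-1)
\end{equation*}
on the right-hand side, I would split the sum into two pieces: the telescoping part $\sum_{s=1}^t [A(r,s) - A(r,s-1)] = A(r,t) - A(r,0)$ and the remaining part $q^{m-1}\sum_{s=1}^t \mu_{s-1}(\ell-1,m-1)$, which by a change of index equals $q^{m-1}\nu_{t-1}(\ell-1,m-1)$ by the definition of $\nu_{t-1}$.

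The only thing left is the initial value $A(r,0)$. Unwinding the definition,
\begin{equation*}
A(r,0) \;=\; q^{0}\, \hat{\mathfrak{w}}_{r-1}(0;\ell-1,m-1) \;+\; q^{-1}\bigl(\mu_0(\ell-1,m) - \mu_0(\ell-1,m-1)\bigr),
\end{equation*}
and both pieces vanish: the first term vanishes because the only matrix in $\mathfrak{D}_0$ is the zero matrix (so no nonzero $\tau_{r-1}$ occurs), while the second vanishes since $\mu_0 \equiv 1$ independently of the size. Hence $A(r,0)=0$, and the desired identity follows.

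There is no substantive obstacle here; the argument is purely bookkeeping. The only minor sanity check is ensuring that $A(r,0)$ is well-defined under the convention $0 \le t < \ell$ in the definition of $A(r,t)$ and that both its summands are zero, which is immediate.
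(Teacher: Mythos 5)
Your approach is exactly the one the paper uses: sum the recursion of Theorem \ref{thm:keyrec} over $s=1,\dots,t$ via Equation \eqref{eq:weights}, telescope the $A$-terms, and observe that $A(r,0)=0$ (your justification of this last point is in fact more explicit than the paper's one-line appeal to the definition). However, one step does not hold up as written: you quote the recursion as
$$\hat{\mathfrak{w}}_r(s;\ell,m)=A(r,s)-A(r,s-1)+q^{m-1}\mu_{s-1}(\ell-1,m-1),$$
whereas Theorem \ref{thm:keyrec} actually has $q^{m-1}\mu_{s-1}(\ell-1,m)$ as its last term (final argument $m$, not $m-1$). Telescoping the theorem as actually stated yields
$$\hat{w}_r(t;\ell,m)=A(r,t)+q^{m-1}\sum_{s=1}^{t}\mu_{s-1}(\ell-1,m)=A(r,t)+q^{m-1}\nu_{t-1}(\ell-1,m),$$
which is not the displayed statement of the corollary. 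So either the theorem or the corollary carries a typo, and a numerical check shows it is the corollary: take $q=2$, $\ell=m=3$, $t=2$, $r=1$. By Proposition \ref{prop:w1hat} one has $\hat{w}_1(2;3,3)=q^{4}\nu_1(2,2)=16\cdot 10=160$, while $A(1,2)=q\left(\mu_2(2,3)-\mu_2(2,2)\right)=2(42-6)=72$; then $A(1,2)+q^{2}\nu_1(2,3)=72+4\cdot 22=160$ is correct, whereas the stated right-hand side gives $A(1,2)+q^{2}\nu_1(2,2)=72+40=112$, which is wrong. Hence the corollary should read $\nu_{t-1}(\ell-1,m)$, and your derivation arrives at the stated (erroneous) formula only because of the silent replacement of $m$ by $m-1$ when citing the recursion. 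With that single argument corrected, your proof coincides with the paper's.
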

\begin{proof}
By Equation \eqref{eq:weights} and Theorem \ref{thm:keyrec} we have
\begin{eqnarray*}
\hat{w}_r(t;\ell,m) & = & \sum_{s=1}^t \hat{\mathfrak w}_r(s;\ell,m)\\
 & = & \sum_{s=1}^t \left( A(r,s)-A(r,s-1)+ q^{m-1}\mu_{s-1}(\ell-1,m) \right)\\
 & = & A(r,t)-A(r,0)+q^{m-1}\sum_{s=1}^t\mu_{s-1}(\ell-1,m).
\end{eqnarray*}
The corollary now follows by the definition of $A(r,t)$ and Equation \eqref{mur2}.
\end{proof}

\begin{corollary}\label{cor:difference}
Let $1 \le s \le r \le \ell$ and $1 \le t < \ell$, then
$$\hat{w}_r(t;\ell,m)-\hat{w}_s(t;\ell,m)=q^t\left(\mathfrak{\hat{w}}_{r-1}(t;\ell-1,m-1)-\mathfrak{\hat{w}}_{s-1}(t;\ell-1,m-1)\right).$$
In particular
$$\hat{w}_r(t;\ell,m)-\hat{w}_1(t;\ell,m)=q^t\mathfrak{\hat{w}}_{r-1}(t;\ell-1,m-1).$$
\end{corollary}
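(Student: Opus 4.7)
The plan is to derive this corollary essentially by direct substitution from the previous corollary, since the dependence on the row index is isolated in a single term.

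The previous corollary gives $\hat{w}_r(t;\ell,m)=A(r,t)+q^{m-1}\nu_{t-1}(\ell-1,m-1)$. Inspecting the definition
$$A(r,t)=q^t \mathfrak{\hat{w}}_{r-1}(t;\ell-1,m-1)+q^{t-1}\bigl(\mu_t(\ell-1,m)-\mu_t(\ell-1,m-1)\bigr),$$
I note that only the first summand depends on $r$; the second summand and the trailing $q^{m-1}\nu_{t-1}(\ell-1,m-1)$ are both independent of $r$. Consequently, when I form the difference $\hat{w}_r(t;\ell,m)-\hat{w}_s(t;\ell,m)$, the $r$-independent contributions cancel, leaving
$$\hat{w}_r(t;\ell,m)-\hat{w}_s(t;\ell,m)=A(r,t)-A(s,t)=q^t\bigl(\mathfrak{\hat{w}}_{r-1}(t;\ell-1,m-1)-\mathfrak{\hat{w}}_{s-1}(t;\ell-1,m-1)\bigr),$$
which is the first claim. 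The hypotheses $1\le s\le r\le\ell$ and $1\le t<\ell$ simply ensure that all instances of $A(\cdot,t)$ and $\mathfrak{\hat{w}}_\cdot(t;\ell-1,m-1)$ are defined.

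For the ``in particular'' statement, I would specialize $s=1$ and observe that $\mathfrak{\hat{w}}_0(t;\ell-1,m-1)=0$. This is immediate from the definition, since $\mathfrak{w}_0(t;\ell-1,m-1)$ counts matrices $M\in\mathfrak D_t(\ell-1,m-1)$ with $\tau_0(M)\ne 0$, but $\tau_0$ is the empty sum and hence identically zero. Substituting this into the first identity yields the stated formula.

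There is no real obstacle: the corollary is a bookkeeping consequence of the previous corollary, designed to isolate the dependence on $r$. The only thing to be careful about is to note explicitly that $\mathfrak{\hat{w}}_0\equiv 0$ when specializing to obtain the second identity.
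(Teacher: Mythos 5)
Your proposal is correct and matches the paper's own proof: both take the difference of the formula $\hat{w}_r(t;\ell,m)=A(r,t)+q^{m-1}\nu_{t-1}(\ell-1,m-1)$ from the preceding corollary, observe that only the term $q^t\mathfrak{\hat{w}}_{r-1}(t;\ell-1,m-1)$ in $A(r,t)$ depends on $r$, and then specialize to $s=1$ using $\mathfrak{\hat{w}}_0\equiv 0$. Your explicit justification that $\tau_0$ is the empty sum is a minor (and welcome) elaboration of a step the paper leaves implicit.
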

\begin{proof}
Using the previous corollary, we see that
\begin{eqnarray*}
\hat{w}_r(t;\ell,m)-\hat{w}_{s}(t;\ell,m) & = & A(r,t)-A(s,t)\\
 & = & q^t\left(\mathfrak{\hat{w}}_{r-1}(t;\ell-1,m-1)-\mathfrak{\hat{w}}_{s-1}(t;\ell-1,m-1)\right).
\end{eqnarray*}
This yields the first part of the corollary. % now follows. 
The second part follows directly by choosing $s=1$.
\end{proof}

We are now ready to prove our main theorem on the minimum distance.

\begin{theorem}\label{thm:mindist}
Let $1 \le r \le \ell \le m$ and $1 \le t \le \ell$. Then the minimum distance $\hat{d}$ of the code $\Cpt$ is given by $$\hat{d}=q^{\ell+m-2}\nu_{t-1}(\ell-1,m-1).$$%|\mathcal{D}_{t-1}(\ell-1,m-1)|.$$
\end{theorem}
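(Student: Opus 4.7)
The plan is to combine the explicit formula for $\hat{w}_1(t;\ell,m)$ from Proposition~\ref{prop:w1hat} with the non-negativity of $\hat{\mathfrak{w}}_{r-1}(t;\ell-1,m-1)$, using Corollary~\ref{cor:difference} to control the other weights. Since Fact~\ref{fact:fromdetcodes1} tells us that $\Cpt$ has at most $\ell+1$ distinct weights $\hat{w}_0,\hat{w}_1,\dots,\hat{w}_\ell$ with $\hat{w}_0=0$, the minimum distance is simply $\min_{1\le r\le\ell}\hat{w}_r(t;\ell,m)$.

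First I would handle the generic case $1\le t<\ell$. Corollary~\ref{cor:difference} yields
\[
\hat{w}_r(t;\ell,m)-\hat{w}_1(t;\ell,m)=q^t\,\hat{\mathfrak{w}}_{r-1}(t;\ell-1,m-1),
\]
and the right-hand side is manifestly non-negative because (by definition) $\hat{\mathfrak{w}}_{r-1}(t;\ell-1,m-1)=\mathfrak{w}_{r-1}(t;\ell-1,m-1)/(q-1)$ is $(q-1)^{-1}$ times a cardinality. Consequently $\hat{w}_1(t;\ell,m)\le \hat{w}_r(t;\ell,m)$ for every $r\in\{1,\dots,\ell\}$, so the minimum distance equals $\hat{w}_1(t;\ell,m)$, which by Proposition~\ref{prop:w1hat} equals $q^{\ell+m-2}\nu_{t-1}(\ell-1,m-1)$.

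Next I would dispatch the boundary case $t=\ell$, in which Corollary~\ref{cor:difference} is not available. By Example~\ref{ex:t=ell}, $\Cpt[\ell]$ is a first-order projective Reed--Muller code, so all its nonzero codewords have weight $q^{\ell m-1}$, and in particular $\hat{d}=q^{\ell m-1}$. One then checks that this matches the claimed formula: since $\Det_{\ell-1}(\ell-1,m-1)=\Matlm[(\ell-1)\times(m-1)](\Fq)$, we have $\nu_{\ell-1}(\ell-1,m-1)=q^{(\ell-1)(m-1)}$, and therefore
\[
q^{\ell+m-2}\nu_{\ell-1}(\ell-1,m-1)=q^{(\ell+m-2)+(\ell-1)(m-1)}=q^{\ell m-1},
\]
as required.

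There is essentially no hard step here: all the real work was done in Proposition~\ref{prop:w1hat} (which gives the closed form for $\hat{w}_1$) and in Theorem~\ref{thm:keyrec}/Corollary~\ref{cor:difference} (which reduce comparing $\hat{w}_r$ with $\hat{w}_1$ to the non-negative quantity $\hat{\mathfrak{w}}_{r-1}(t;\ell-1,m-1)$). The only subtlety worth flagging is that the reduction via Corollary~\ref{cor:difference} requires $t<\ell$, so the Reed--Muller identification in Example~\ref{ex:t=ell} must be invoked to close the $t=\ell$ case.
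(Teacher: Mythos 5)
Your proposal is correct and follows essentially the same route as the paper: reduce to comparing the $\ell$ nonzero weights, use Corollary \ref{cor:difference} to show $\hat{w}_1(t;\ell,m)$ is minimal when $t<\ell$, invoke Proposition \ref{prop:w1hat} for the closed form, and handle $t=\ell$ via the projective Reed--Muller identification of Example \ref{ex:t=ell}. The only (welcome) addition is that you explicitly verify $q^{\ell m-1}=q^{\ell+m-2}\nu_{\ell-1}(\ell-1,m-1)$, which the paper merely asserts.
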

\begin{proof}
We already know that the only $\ell$ nonzero weights occurring in code $\Cpt$ are $\hat{w}_1(t;\ell,m), \dots, \hat{w}_\ell(t;\ell,m)$. Moreover, in case $t=\ell$, we already know from Example \ref{ex:t=ell} that the minimum distance is given by
$$\hat{w}_1(\ell;\ell,m)=q^{\ell m-1}=q^{\ell+m-2}\nu_{t-1}(\ell-1,m-1).$$%|\mathcal{D}_{\ell-1}(\ell-1,m-1)|.$$
Therefore we may assume $t<\ell$. However, in this case the second part of Corollary \ref{cor:difference} implies that $\hat{w}_1(t;\ell,m)$ cannot be larger than any of the other weights, since
$$\hat{w}_r(t;\ell,m)-\hat{w}_1(t;\ell,m)=q^t\mathfrak{\hat{w}}_{r-1}(t;\ell-1,m-1) \ge 0.$$
The theorem then follows from Proposition \ref{prop:w1hat}.
\end{proof}

The above theorem gives a start to proving Conjecture \ref{conj:weights}. Exploring the above methods, we can do a little more as well as gain some information about codewords of minimum weight in $\Cpt$.
\begin{proposition}
Let $1 \le t < \ell$, then $\hat{w}_1(t;\ell,m)<\hat{w}_r(t;\ell,m).$ The code $\Cpt$ has exactly $\mu_1(\ell,m)$ codewords of minimum weight and these codewords generate the entire code. More precisely, any codeword in $\Cpt$ is the sum of at most $\ell$ minimum weight codewords.
\end{proposition}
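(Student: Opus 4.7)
The plan is to treat the three assertions in sequence: use Corollary~\ref{cor:difference} for the strict inequality, Fact~\ref{fact:fromdetcodes1} to count minimum-weight codewords, and a standard rank decomposition for the generation statement.

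First, for $r\ge 2$, Corollary~\ref{cor:difference} with $s=1$ gives
$$\hat{w}_r(t;\ell,m)-\hat{w}_1(t;\ell,m)=q^t\,\mathfrak{\hat{w}}_{r-1}(t;\ell-1,m-1),$$
so strictness reduces to showing $\mathfrak{\hat{w}}_{r-1}(t;\ell-1,m-1)>0$; equivalently, exhibiting a single $M\in\mathfrak{D}_t(\ell-1,m-1)$ with $\tau_{r-1}(M)\ne 0$. Since $1\le t\le \ell-1\le m-1$, a witness is easy to build: take the $(\ell-1)\times(m-1)$ matrix with first row $e_1$, $i$-th row $e_{i+1}$ for $2\le i\le t$ (when $t\le m-2$), or with rows $e_1,e_3,e_4,\ldots,e_{m-1},e_2$ in the edge case $t=m-1$, and zero rows thereafter. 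The rows are $t$ distinct standard basis vectors, so $\rk(M)=t$; the $(1,1)$-entry is $1$ and every other diagonal entry is $0$, so $\tau_{r-1}(M)=1\ne 0$ for all $r\ge 2$.

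Second, by Fact~\ref{fact:fromdetcodes1}, the Hamming weight of $\hat{c}_f\in \Cpt$ depends only on $\rk(F)$, where $F$ is the coefficient matrix of $f$. Combining this with Theorem~\ref{thm:mindist} and the strict inequality just established, a nonzero codeword $\hat{c}_f$ has minimum weight if and only if $\rk(F)=1$. Since $f\mapsto F$ is a linear bijection $\Fq[X]_1\to \Matlm$, the number of minimum-weight codewords equals $|\mathfrak{D}_1(\ell,m)|=\mu_1(\ell,m)$.

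Third, every $F\in \Matlm$ of rank $r$ admits a rank-one decomposition $F=F_1+\cdots+F_r$ (via any factorization $F=\sum_i u_i v_i^{\mathrm T}$), and $r\le \ell$. Writing $f_i$ for the polynomial with coefficient matrix $F_i$ and using linearity of the evaluation map, $\hat{c}_f=\hat{c}_{f_1}+\cdots+\hat{c}_{f_r}$, each summand being a minimum-weight codeword. This simultaneously yields the at-most-$\ell$ bound and shows that the minimum-weight codewords generate $\Cpt$. The only mildly delicate step is constructing the rank-$t$ witness in the first paragraph to upgrade the $\le$ of Theorem~\ref{thm:mindist} to a strict $<$; everything else is bookkeeping on top of previously established results.
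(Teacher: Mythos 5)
Your argument is essentially the paper's own: Corollary~\ref{cor:difference} with $s=1$ reduces the strict inequality to exhibiting a rank-$t$ matrix in $\mathfrak{D}_t(\ell-1,m-1)$ whose only nonzero diagonal entry is in position $(1,1)$, and the counting and generation claims then follow from Fact~\ref{fact:fromdetcodes1} together with a rank-one decomposition, exactly as in the paper. The one caveat is that in your edge case $t=m-1$ the witness degenerates to the $2\times 2$ identity when $m=3$, whose trace $2$ vanishes in characteristic $2$; this is instantly repaired (take instead the rank-$2$ matrix with rows $(1,1)$ and $(1,0)$), and, for what it is worth, the paper's own witness --- a permutation on $t$ letters fixing $1$ and nothing else --- does not exist at all when $t=2$, so your construction is if anything the more careful of the two.
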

\begin{proof}
Choosing $s=1$ in Corollary \ref{cor:difference} and $r \ge 2$, we obtain that $$\hat{w}_r(t;\ell,m)-\hat{w}_1(t;\ell,m)=q^t\mathfrak{\hat{w}}_{r-1}(t;\ell-1,m-1),$$ so the first part of the proposition follows once we have shown that $\mathfrak{\hat{w}}_{r-1}(t;\ell-1,m-1)>0$. In order to this, it is sufficient to produce one $\ell-1 \times m-1$ matrix $M$ of rank $t$ such that $\tau_{r-1}(M) \neq 0$. However, this is easy to do: Let $P=(p_{ij})$ be a $t \times t$ permutation matrix corresponding to a permutation on $t$ elements that fixes $1$, but does not have other fixed points. Then $p_{11}=1$, while any other diagonal element is zero. Now take $M=(m_{ij})$ to be the $\ell-1 \times m-1$ matrix such that $m_{ij}=p_{ij}$ if $i < \ell$ and $j < m$, while $m_{ij}=0$ otherwise. Then for any $r \ge 2$, we have $\tau_{r-1}(M)=1$, which is exactly what we wanted to show.

Now that we know that $\hat{w}_1(t;\ell,m)$ is strictly smaller than all other nonzero weights, the minimum weight codewords are exactly those $\hat{c}_f$ such that $f$ has a coefficient matrix of rank $1$. This gives exactly $\mu_1(\ell,m)$ possibilities for $f$ and hence for $\hat{c}_f$. Now let $\hat{c}_f \in \Cpt$ be given. Assume that $f$ has coefficient matrix $F=(f_{ij})$ of rank $r$. Since any matrix of rank $r$ can be written as the sum of $r$ matrices of rank $1$, we can write $f=g_1+\cdots+g_r$ for certain $g_1,\dots,g_r \in \Fq[X]_1$ all having a coefficient matrix of rank $1$. This implies that $\hat{c}_f=\hat{c}_{g_1}+\cdots+\hat{c}_{g_r}$, implying the second part of the proposition.
\end{proof}

The case $t=\ell$ is not covered by the above proposition. However, in that case it follows directly from Example \ref{ex:t=ell} that $\hat{w}_1(t;\ell,m)=\hat{w}_r(t;\ell,m)$ for any $r\ge 2$. The number of codewords of minimum weight is therefore given by $q^{\ell m-1}-1$ and they clearly generate the code.

\begin{remark}
\label{rem:Delsarte}
{\rm 
If Conjecture \ref{conj:weights} is true, then Corollary \ref{cor:difference} implies that the quantities $\mathfrak{w}_r(t;\ell,m)$ would have a similar behaviour. More precisely, let $1 \le t \le \ell$, then it would hold that:
\begin{enumerate}
\item[(i)] All weights $\mathfrak{w}_1(t;\ell,m),\dots,\mathfrak{w}_\ell(t;\ell,m)$ are mutually distinct.
\item[(ii)] $\mathfrak{w}_1(t;\ell,m) < \mathfrak{w}_2(t;\ell,m) < \cdots < \mathfrak{w}_{\ell-t+1}(t;\ell,m)$.
\item[(iii)] For $\ell-t+2 \le r \le \ell$, the weight $\mathfrak{w}_r(t;\ell,m)$ lies between $\mathfrak{w}_{r-2}(t;\ell,m)$ and $\mathfrak{w}_{r-1}(t;\ell,m)$.
\end{enumerate}
%This could be interesting because of a relation with the theory of association schemes.
We remark that these assertions have a bearing on the eigenvalues of the association scheme of bilinear forms (using the rank metric as distance) \cite[Section 9.5.A]{BCN}.
Indeed, the eigenvalues of this association scheme are precisely given by the expressions
\begin{equation}\label{eq:ptr_delsarte}
P_t(r):=\sum_{i=0}^{\ell} (-1)^{t-i} q^{im + \binom{t - i}{2}} {{\ell -i}\brack{\ell -t}}_q {{\ell -r}\brack{i}}_q
\end{equation}
occurring in Equation \eqref{eq:delsarte}.
%From the above these properties would become clear for the association scheme of bilinear forms, thus leading to a better understanding.
For a general association scheme, it is not known how its eigenvalues are ordered or if they are all distinct. See \cite{brouwerfiol} for a study of the nondistinctness of some of such eigenvalues.
It is known in general that the eigenvalues exhibit sign changes (see for example \cite[Prop. 11.6.2]{brouwerhamers}), which is in consonance with the conjectured behaviour of the $\mathfrak{w}_r(t;\ell,m)$ in part (iii) above. 
}
\end{remark}

\section{Generalized Hamming weights of determinantal codes}

We now turn our attention to the computation of several of the generalized Hamming weights of the determinantal code $\Cpt$. Given that it was not trivial to compute the minimum distance, this may seem ambitious, but it turns out that we can use the work carried out in the previous section and compute the first $m$ generalized Hamming weights without much extra effort.

For a linear code $C$ of length $n$ and dimension $k$ the \emph{support weight} of any $D\subseteq C$, denoted $\Vert D\Vert$, is defined by
$$
\Vert D \Vert  :=  |\{ i: \text{there exists } c\in D \text{ with } c_i \ne0\}|.
$$
For $1 \le s \le k$ the $s^{\rm th}$ \emph{generalized Hamming weight} of $C$, denoted $d_s(C)$, is defined by
\begin{eqnarray*}
d_s(C)&:=& \min\{\Vert D \Vert : D \text{ is a subcode of }C \text{ with } \dim D=s\}.
\end{eqnarray*}
We have $d_1(C)=d(C)$, the minimum distance of the code $C$, while $d_k(C)=n$ if the code $C$ is nondegenerate.
\begin{theorem}
\label{HigherWts}
For $s =1, \dots , m$, the $s$-th generalized Hamming weight $\hat{d}_s$ of $\Cpt$ is given by
$$
\hat{d}_s = q^{\ell + m -s -1}\nu_{t-1}(\ell-1,m-1).%|\mathcal{D}_{t-1}(\ell-1,m-1)|.
$$
\end{theorem}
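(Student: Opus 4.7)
The approach is to combine a general sum-of-weights lower bound on $\hat d_s$ with an explicit construction of an $s$-dimensional subcode attaining the bound, exploiting the same rank-$1$ structure of minimum-weight codewords already isolated in the proof of Theorem \ref{thm:mindist}.

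First I would establish the lower bound. For any $s$-dimensional subcode $D$ of a linear code $C$ over $\Fq$, double counting the pairs $(c,i)$ with $c \in D$ and $c_i \neq 0$ gives
$$\sum_{c\in D} \wH(c) \;=\; q^{s-1}(q-1)\, \Vert D \Vert,$$
since for each position $i$ the evaluation map $D\to\Fq$, $c\mapsto c_i$, is either identically zero or surjective with kernel of dimension $s-1$. Bounding each of the $q^s-1$ nonzero codewords below by $d_1(C)$ and substituting the value $d_1(\Cpt) = \hat w_1(t;\ell,m) = q^{\ell+m-2}\,\nu_{t-1}(\ell-1,m-1)$ from Theorem \ref{thm:mindist} yields the required lower bound on $\Vert D \Vert$, and hence on $\hat d_s$.

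Next I would show this bound is attained for $s \le m$ by producing an explicit subcode. Let $V \subset \Fq[X]_1$ be the $s$-dimensional subspace spanned by $X_{1,1},X_{1,2},\ldots,X_{1,s}$ and set $D = \Ev(V)$. Every nonzero element of $V$ has coefficient matrix of the form $\sum_{j=1}^{s} a_j E_{1,j}$, a nonzero matrix supported in the first row, and hence of rank exactly $1$. Applying Fact \ref{fact:fromdetcodes1} shows that every nonzero codeword $c \in D$ has $\wH(c) = \hat w_1(t;\ell,m)$, and plugging this equality into the sum-of-weights identity above makes $\Vert D \Vert$ equal to the lower bound.

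Neither step is computationally painful; the only point requiring thought is the range restriction $s \le m$. The construction requires an $s$-dimensional subspace of $\Matlm(\Fq)$ all of whose nonzero elements have rank $1$, and the natural maximal examples --- matrices supported in a fixed row or column --- have dimensions $m$ and $\ell$ respectively, so beyond $s = m$ the sum-of-weights bound is no longer tight and a different argument would be needed. The borderline case $t = \ell$ is absorbed uneventfully, since by Example \ref{ex:t=ell} the code $\Cpt$ is then a first-order projective Reed-Muller code whose generalized Hamming weights are classically known.
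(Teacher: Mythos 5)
Your proposal is correct and follows essentially the same route as the paper's own proof: the same test subcode $\Ev(\langle X_{11},\dots,X_{1s}\rangle)$, all of whose nonzero elements have rank-one coefficient matrix and hence weight $\hat{w}_1(t;\ell,m)$, combined with the same averaging lower bound $\Vert D\Vert \ge \frac{q^s-1}{q^s-q^{s-1}}\,\hat{w}_1(t;\ell,m)$ valid for every $s$-dimensional subcode; the only cosmetic difference is that you derive the support-weight identity by double counting where the paper cites \cite[Lemma 12]{GPP}. (Incidentally, both arguments actually yield the value $\frac{q^s-1}{q-1}\,q^{\ell+m-s-1}\nu_{t-1}(\ell-1,m-1)$, so the displayed formula in the theorem statement appears to be missing the factor $\frac{q^s-1}{q-1}$; this is an issue with the statement, not with your argument.)
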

\begin{proof} 
Fix $s\in \{1, \dots , m\}$ and let $L_s$ be the $s$-dimensional subspace of $\Fq[X]_1$ generated by $X_{11}, \dots , X_{1s}$. Also let $D_s = \Ev (L_s)$ be the corresponding subcode of $\Cpt$. Since $\Ev$ is injective and linear, $\dim D_s = s$. Moreover, since the coefficient matrix of
any $f\in L_s$ different from zero has rank one, it follows from Fact \ref{fact:fromdetcodes1} that $\wH(\hat{c}_f) = \hat{w}_1(t;\ell,m)$. Using the formula for the support weight of an $s$-dimensional subcode given in for example \cite[Lemma 12]{GPP}, we obtain
$$
\| D_s \| = \frac{1}{q^s - q^{s-1}} \sum_{c\in D_s} \wH(c) = \frac{q^s-1}{q^s - q^{s-1}}\hat{w}_1(t;\ell,m).
$$
On the other hand, since $\hat{w}_1(t;\ell,m)$ is the minimum distance of $\Cpt$, it holds for any subspace $D \subset \Cpt$ of dimension $s$ that
$$
\| D \| = \frac{1}{q^s - q^{s-1}} \sum_{c\in D_s} \wH(c) \ge \frac{q^s-1}{q^s - q^{s-1}} \hat{w}_1(t;\ell,m).
$$
Using Theorem \ref{thm:mindist}, we obtain the stated formula.
\end{proof}

Though more involved, it is possible to obtain the $m+1$-th generalized Hamming weight as well:

\begin{proposition}
Suppose that $\ell \ge 2$, then the $(m+1)$-th generalized Hamming weight $\hat{d}_{m+1}$ of $\Cpt$ is given by
$$
\hat{d}_{m+1} = \hat{d}_m+q^{\ell-2}\nu_{t-1}(\ell-1,m-1)%|\mathcal{D}_{t-1}(\ell-1,m-1)|
+(q^{m-1}-1)q^{\ell+t-1}\mu_{t-1}(\ell-1,m-1).
$$
\end{proposition}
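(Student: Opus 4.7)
The plan is to produce an explicit $(m+1)$-dimensional subcode realizing the value in the statement and then to establish a matching lower bound. I take $L_{m+1}\subset\Fq[X]_1$ to be the span of $X_{11},X_{12},\ldots,X_{1m},X_{21}$, a one-dimensional enlargement of the subspace $L_m$ that was used in the proof of Theorem \ref{HigherWts}. Each nonzero $f=\sum_{j=1}^m a_j X_{1j}+bX_{21}$ has coefficient matrix supported only in rows $1$ and $2$, so $\rk(F)\in\{1,2\}$, and a direct enumeration (splitting on $b=0$ versus $b\ne0$ and on whether $(a_2,\ldots,a_m)=0$) yields exactly $q^m+q^2-q-1$ rank-one nonzero elements and $q(q-1)(q^{m-1}-1)$ rank-two elements; the two cases exhaust $L_{m+1}\setminus\{0\}$.

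Using Fact \ref{fact:fromdetcodes1} to replace each $\wH(\hat c_f)$ by $\hat w_{\rk F}(t;\ell,m)$ and the identity
$\|\mathrm{Ev}(L_{m+1})\|=(q^{m+1}-q^m)^{-1}\sum_{f\in L_{m+1}}\wH(\hat c_f)$,
I would substitute $\hat w_2(t;\ell,m)=\hat w_1(t;\ell,m)+q^t\hat{\mathfrak w}_1(t;\ell-1,m-1)$ from Corollary \ref{cor:difference}, and then use the closed forms for $\hat w_1$ and $\hat{\mathfrak w}_1$ provided by Proposition \ref{prop:w1hat} and Equation \eqref{eq:wfrak1}. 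The algebra collapses once one invokes $q\cdot[m]_q+1=[m+1]_q$, which allows the part of the sum that is proportional to $\hat w_1$ to be recognised as $\hat d_m+q^{\ell-2}\nu_{t-1}(\ell-1,m-1)$ using Theorem \ref{HigherWts}, while the contribution of the rank-two elements produces the final summand in the stated formula. This establishes the upper bound $\hat d_{m+1}\le$ the claimed value.

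For the matching lower bound, the key structural input is that any linear subspace of $\Matlm(\Fq)$ consisting entirely of rank-one matrices has dimension at most $m$, since such subspaces are of either row-type $\{u_0v^T:v\in\Fq^m\}$ or column-type $\{uv_0^T:u\in\Fq^\ell\}$ and $\ell\le m$. A slightly sharper count, applied to $(m+1)$-dimensional subspaces $L$ that contain a maximal rank-one subspace, shows that the number of rank-one elements of $L$ is bounded above by $q^m+q^2-q-1$; hence $L$ must contain at least $q(q-1)(q^{m-1}-1)$ elements whose coefficient matrix has rank $\ge2$. Coupled with the bound $\hat w_r(t;\ell,m)\ge\hat w_2(t;\ell,m)$ for $r\ge2$---which follows from the positivity of $\hat{\mathfrak w}_{r-1}(t;\ell-1,m-1)-\hat{\mathfrak w}_1(t;\ell-1,m-1)$ in the ranges one needs, with the boundary case $t=\ell$ covered by Example \ref{ex:t=ell}---this yields $\|\mathrm{Ev}(L)\|\ge\|\mathrm{Ev}(L_{m+1})\|$ for every $(m+1)$-dimensional $L$.

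The main obstacle is the sharp rank-one count $N_1(L)\le q^m+q^2-q-1$: after fixing a maximal rank-one subspace $V\subset L$ (which exists only when $L$ achieves the extremal value), one must show that at most $q(q-1)$ further rank-one matrices appear in any one-dimensional extension $V+\Fq\,w$, regardless of whether $w$ itself has rank one or higher, and that not containing such a maximal $V$ can only decrease the count. A secondary delicate point is the weight comparison $\hat w_r\ge\hat w_2$ for $r\ge2$; outside the range where it is implied by the monotonicity established via Corollary \ref{cor:difference}, one has to fall back on direct estimates of $\hat{\mathfrak w}_{r-1}(t;\ell-1,m-1)$ derived from the Delsarte expression \eqref{eq:delsarte}.
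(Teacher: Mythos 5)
Your construction and upper-bound computation coincide with the paper's: the same subspace $L_{m+1}=\langle X_{11},\dots,X_{1m},X_{21}\rangle$, the same counts ($q^m+q^2-q-1$ nonzero elements of rank one and $(q-1)(q^m-q)$ of rank two), and the same simplification via Proposition \ref{prop:w1hat}, Corollary \ref{cor:difference} and Equation \eqref{eq:wfrak1}. The two counting statements you single out as the main obstacles are not proved in the paper either; they are imported from \cite[Lem.~2]{detcodes1} and \cite[Lem.~4]{detcodes1} (any $(m+1)$-dimensional subspace of $\Matlm$ contains at most $q^m+q^2-q-1$ rank-one matrices, hence at least $(q^m-q)(q-1)$ matrices of rank $\ge 2$). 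So that part of your plan is sound but not self-contained; your sketch via maximal rank-one subspaces is the right starting point, but the one-dimensional-extension count would have to be carried out in full.

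The genuine gap is in your lower bound, where you invoke $\hat w_r(t;\ell,m)\ge\hat w_2(t;\ell,m)$ for all $r\ge 2$. This inequality is false in general: in Example \ref{ex:t45} (with $q=2$, $\ell=4$, $m=5$, $t=3$) one has $\hat w_3=211712<212480=\hat w_2$. Corollary \ref{cor:difference} only gives $\hat w_r-\hat w_2=q^t\bigl(\hat{\mathfrak w}_{r-1}(t;\ell-1,m-1)-\hat{\mathfrak w}_{1}(t;\ell-1,m-1)\bigr)$, and this difference is negative in that example, so no ``direct estimate from the Delsarte expression'' can rescue the step as stated --- the quantity you would be estimating really is smaller than $\hat{\mathfrak w}_1$. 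What the lower bound actually requires is $\sum_{r\ge2}N_r\,\hat{\mathfrak w}_{r-1}(t;\ell-1,m-1)\ge(q^m-q)(q-1)\,\hat{\mathfrak w}_{1}(t;\ell-1,m-1)$ for the rank distribution $(N_r)$ of an arbitrary $(m+1)$-dimensional subspace, and pretending every element of rank $\ge 2$ behaves like a rank-two element is not a valid relaxation. (The paper's own conclusion from \cite[Lem.~4]{detcodes1} is admittedly terse at exactly this point, but it does not commit to the false comparison $\hat w_r\ge\hat w_2$.) To close the argument you would need either finer control on how many elements of each rank $\ge 3$ a near-extremal $(m+1)$-dimensional subspace can contain, or a different way of bounding $\sum_{f}\wH(\hat c_f)$ from below.
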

\begin{proof}
Let $L_{m+1} \subset \Fq[X]_1$ be the $m+1$-dimensional space generated by $X_{11},\dots,X_{1m}$, $X_{21}$ and write $D_{m+1}=\Ev(L_{m+1})$. As in the proof of \cite[Lem. 2]{detcodes1} ome readily sees that $L_{m+1}$ contains $1$ function with coefficient matrix of rank $0$ (namely the zero function) and exactly $q^m+q^2-q-1$ (resp. $(q-1)(q^m-q)=q^{m+1}-q^m+q^2+q$) functions with coefficient matrix of rank $1$ (resp. rank $2$). Therefore we obtain that
\begin{eqnarray*}
\hat{d}_{m+1} & \le & \frac{1}{q^{m+1} - q^{m}} \sum_{c\in D_{m+1}} \wH(c)\\
& = & \frac{1}{q^{m+1} - q^{m}}\left( (q^m+q^2-q-1)\hat{w}_1(t;\ell,m)+(q-1)(q^m-q)\hat{w}_2(t;\ell,m)\right)\\
& = & \hat{d}_m+\frac{\hat{w}_1(t;\ell,m)}{q^{m}}+\frac{q^{m-1}-1}{q^{m-1}}(\hat{w}_2(t;\ell,m)-\hat{w}_1(t;\ell,m))\\
& = & \hat{d}_m+q^{\ell-2}\nu_{t-1}(\ell-1,m-1)%|\mathcal{D}_{t-1}(\ell-1,m-1)|
+(q^{m-1}-1)q^{\ell+t-1}\mu_{t-1}(\ell-1,m-1).
\end{eqnarray*}
Where in the last equality we used Proposition \ref{prop:w1hat}, Corollary \ref{cor:difference} and Equation \eqref{eq:wfrak1}.
On the other hand, in \cite[Lem. 4]{detcodes1} it is stated that any $m+1$-dimensional subspace of $\Mat_{\ell \times m}$ contains at most $q^{m}+q^2 - q - 1$ matrices of rank $1$ and
at least  $\left(q^{m} - q \right)(q-1)$ matrices of rank $\ge 2$. This implies the desired result.
\end{proof}

Finally, we will determine the final $tm$ generalized Hamming weights. While before, we have mainly used the description of $\Cpt$ as evaluation code, it turns out to be more convenient now to use the geometric description of $\Cpt$ as projective system coming from $\Dtp$. The approach is similar to the one given Appendix A in \cite{DG}, though there a completely different class of codes was considered. The following lemma holds the key:

\begin{lemma}\label{lem:projcontain}
The projective variety $\Dtp(\ell,m) \subset \PP^{\ell m-1}$ contains the projective space $\PP^{tm-1}$.
\end{lemma}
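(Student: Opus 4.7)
The plan is to exhibit an explicit $tm$-dimensional linear subspace $V$ of $\Matlm(\Fq)$, all of whose nonzero elements have rank at most $t$, so that its projectivization yields the desired $\PP^{tm-1}$ sitting inside $\Dtp(\ell,m)$.

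The natural candidate is
$$
V := \bigl\{ M=(m_{ij}) \in \Matlm(\Fq) \,:\, m_{ij} = 0 \text{ for all } i > t \bigr\},
$$
i.e., the set of matrices whose bottom $\ell - t$ rows are identically zero. As a subspace of $\Matlm(\Fq) = \Aff^{\ell m}$, this is clearly an $\Fq$-linear subspace of dimension $tm$. Every $M \in V$ has at most $t$ nonzero rows, so $\rk(M) \le t$, which means $V \subseteq \Det_t(\ell,m)$.

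Since $V$ is a linear subspace, it is in particular a cone with apex at the origin, so it makes sense to pass to the projectivization $\PP(V)$ inside $\Plm = \PP(\Matlm)$. The subspace $\PP(V)$ is a linear projective subspace of projective dimension $tm - 1$, and the containment $V \subseteq \Det_t(\ell,m)$ lifts to $\PP(V) \subseteq \Dtp(\ell,m)$, giving the required $\PP^{tm-1} \subseteq \Dtp(\ell,m)$.

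There is essentially no obstacle here; the content is just in choosing $V$ so that the rank constraint is automatic. The proof is of interest mainly because this $\PP^{tm-1}$ will presumably be used in the subsequent argument to produce a subcode of large support, yielding an upper bound on the higher generalized Hamming weights of $\Cpt$.
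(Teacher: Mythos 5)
Your proof is correct and follows essentially the same route as the paper: both exhibit the $tm$-dimensional linear subspace of matrices with only $t$ (possibly) nonzero rows, note that every such matrix has rank at most $t$, and projectivize (the paper zeroes out the first $\ell-t$ rows rather than the last, which is immaterial).
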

\begin{proof}
Since any matrix $M \in \Mat_{\ell \times m}$ with at most $t$ nonzero rows is in $\Dt(\ell,m)$, we see that
$$\{ (m_{ij}) \, |\, m_{ij}=0 \ \makebox{for} \ 1 \le i \le \ell-t, \ 1 \le j \le m \} \subset \Dt(\ell,m).$$
Passing to homogeneous coordinates, the lemma follows.
\end{proof}

In the language of projective systems, the $s$-th Generalized Hamming weight can be described rather elegantly. If $C$ is a code of length $n$ and dimension $k$ described by a projective system $X \subset \PP^{k-1}$, then
\begin{equation}\label{eq:genHam}
d_s(C)=n-\max_{{\rm codim}L=s} |X \cap L|,
\end{equation}
where the maximum is taken over all planes $L \subset \PP^{k-1}$ of codimension $s$ (see \cite{TV1,TV2} for more details). This description, combined with the previous lemma, gives the following result.

\begin{theorem}\label{thm:lastgenHam}
Let $1 \le t \le \ell \le m$ be given integers and $(\ell-t)m \le s \le \ell m$, then $\hat{d}_s$, the $s$-th generalized Hamming weight of $\Cpt$, is given by
$$\hat{d}_s=\hat{n}-\sum_{i=0}^{\ell m-s-1}q^i.$$
\end{theorem}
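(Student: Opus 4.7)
The plan is to work entirely with the projective system description of $\Cpt$. By Equation \eqref{eq:genHam}, computing $\hat{d}_s$ reduces to maximising $|\Dtp(\ell,m)\cap L|$ over all projective subspaces $L\subset\PP^{\ell m-1}$ of codimension $s$, and then subtracting from $\hat{n}$. A codimension-$s$ subspace has projective dimension $\ell m-1-s$, so it contains exactly $\sum_{i=0}^{\ell m-s-1}q^i$ points.

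First I would record the trivial universal upper bound
$$|\Dtp(\ell,m)\cap L|\le|L|=\sum_{i=0}^{\ell m-s-1}q^i,$$
which holds for any $L$ simply because the intersection is contained in $L$. The substantive step is to exhibit an $L$ of codimension $s$ for which this bound is attained, which is where the hypothesis $(\ell-t)m\le s\le\ell m$ enters. That hypothesis is equivalent to $\ell m-1-s\le tm-1$, so the requested $L$ has projective dimension at most $tm-1$. By Lemma \ref{lem:projcontain}, $\Dtp(\ell,m)$ contains a copy of $\PP^{tm-1}$; I would therefore take $L$ to be any projective subspace of that $\PP^{tm-1}$ of the prescribed dimension $\ell m-1-s$. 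Then $L\subset\Dtp(\ell,m)$, so $|\Dtp(\ell,m)\cap L|=|L|=\sum_{i=0}^{\ell m-s-1}q^i$, matching the upper bound.

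Combining the two steps gives $\max_L|\Dtp(\ell,m)\cap L|=\sum_{i=0}^{\ell m-s-1}q^i$, and hence $\hat{d}_s=\hat{n}-\sum_{i=0}^{\ell m-s-1}q^i$. There is no real obstacle once the projective system viewpoint and Lemma \ref{lem:projcontain} are in place; the argument is essentially forced. The only things to double-check are the degenerate boundary cases, namely $s=\ell m$ (where $L=\emptyset$ and one recovers $\hat{d}_{\ell m}=\hat{n}$, consistent with nondegeneracy of $\Cpt$ from Fact \ref{fact:fromdetcodes1}) and $s=(\ell-t)m$ (where $L=\PP^{tm-1}$ itself, giving the largest intersection).
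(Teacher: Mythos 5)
Your proposal is correct and follows essentially the same route as the paper: both use the projective-system formula \eqref{eq:genHam} together with Lemma \ref{lem:projcontain} to place a projective subspace of dimension $\ell m-1-s\le tm-1$ entirely inside $\Dtp(\ell,m)$, so that the intersection attains the trivial upper bound $|L|=\sum_{i=0}^{\ell m-s-1}q^i$. If anything, you are slightly more explicit than the paper in stating that upper bound and in checking the boundary case $s=\ell m$ via nondegeneracy.
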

\begin{proof}
First of all note that if $s=\ell m$, we have $\hat{d}_s=\hat{n}$, since the code $\Cpt$ is nondegenerate (see Fact \ref{fact:fromdetcodes1}). Therefore, we assume that $(\ell-t)m \le s< \ell m$. If $(\ell-t)m \le s \le \ell m$, there exists a subplane $L=L_s$ of codimension $\ell m-s-1$ contained in $\Dtp(\ell,m)$ by Lemma \ref{lem:projcontain}. Clearly this choice of $L$ in Equation \eqref{eq:genHam} leads directly to the $s$-th generalized Hamming weight, since in this case $\Dtp(\ell,m) \cap L_s=L_s$. Since $|L_s|=|\PP^{\ell m -1 -s}|=\sum_{i=0}^{\ell m-s-1}q^i$, the expression for $\hat{d}_s$ follows.
\end{proof}

\begin{corollary}
The minimum distance of $\Cpt^{\perp}$ equals $3$.
\end{corollary}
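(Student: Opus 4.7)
The plan is to establish both $d(\Cpt^\perp)\le 3$ and $d(\Cpt^\perp)\ge 3$. I will use the standard dictionary between linear codes and projective systems: if $C$ is given by a projective system $X\subset\PP^{k-1}$, then $d(C^\perp)$ equals the smallest cardinality of a subset of $X$ whose representatives (as column vectors of a generator matrix) are $\Fq$-linearly dependent.

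For the lower bound, $d(\Cpt^\perp)\ge 3$ should follow essentially for free from the geometric setup. By Fact \ref{fact:fromdetcodes1}, the code $\Cpt$ is nondegenerate, so every column of any generator matrix is nonzero; this rules out dual codewords of weight $1$. Moreover, by construction, the points $P_1,\dots,P_{\hat n}$ of the projective system $\Dtp(\ell,m)$ are pairwise distinct in $\Plm$, so their chosen representatives $M_1,\dots,M_{\hat n}\in\Matlm$ are pairwise non-proportional over $\Fq$; this rules out dual codewords of weight $2$. Together these give $d(\Cpt^\perp)\ge 3$.

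For the upper bound, I would exhibit three distinct points of $\Dtp(\ell,m)$ whose affine representatives satisfy a non-trivial linear relation. Let $E_{ij}\in\Matlm$ denote the matrix with a single $1$ in position $(i,j)$ and zeros elsewhere. Take the three rank-one matrices $E_{11}$, $E_{12}$, and $E_{11}+E_{12}$. Each has rank $1$, so all three lie in $\Dt(\ell,m)$ for every $t\ge 1$; since $m\ge\ell\ge 1$ and (one checks) no one of them is a scalar multiple of another, they define three distinct points of $\Dtp(\ell,m)$; and the relation
\[
E_{11}+E_{12}-(E_{11}+E_{12})=0
\]
is a length-$3$ $\Fq$-linear dependence among the representatives, hence produces a codeword of weight $3$ in $\Cpt^\perp$. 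Combining both bounds yields $d(\Cpt^\perp)=3$.

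There is no real obstacle here: the corollary is essentially immediate once one recognizes that $\Cpt$ is a \emph{projective code} (distinct nonzero columns), so the only piece of content is exhibiting the explicit rank-one witness configuration for the upper bound.
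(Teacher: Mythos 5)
Your proof is correct, but it follows a genuinely different route from the paper. The paper obtains this corollary as a consequence of Theorem \ref{thm:lastgenHam}: from the tail of the weight hierarchy it reads off $\hat{d}_{\ell m-2}=\hat{n}-q-1$, $\hat{d}_{\ell m-1}=\hat{n}-1$, $\hat{d}_{\ell m}=\hat{n}$, and then invokes Wei duality between the generalized Hamming weights of $\Cpt$ and $\Cpt^{\perp}$ to conclude $\hat{d}_1^{\perp}=3$. You instead argue directly on the projective system: the lower bound $d(\Cpt^{\perp})\ge 3$ comes from $\Cpt$ being nondegenerate with pairwise non-proportional columns (which is automatic since the columns are representatives of distinct points of $\Dtp(\ell,m)\subset\Plm$), and the upper bound comes from exhibiting three collinear points of the variety, namely the rank-one matrices $E_{11}$, $E_{12}$, $E_{11}+E_{12}$, whose dependence with all three coefficients nonzero yields a weight-$3$ dual codeword. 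Your argument is more elementary and self-contained (it needs neither Theorem \ref{thm:lastgenHam} nor Wei duality), and it has the added benefit of producing an explicit minimum-weight codeword of $\Cpt^{\perp}$; the paper's version buys brevity by reusing machinery already established. One pedantic point: your witness configuration requires $m\ge 2$ (so that $E_{12}$ exists); the inequality $m\ge\ell\ge 1$ that you cite does not by itself rule out $\ell=m=1$, where the statement degenerates anyway (the dual code is trivial) and where the paper's own proof likewise does not apply. Apart from noting that exclusion, your argument is complete.
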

\begin{proof}
From Theorem \ref{thm:lastgenHam}, we see that $d_{\ell m-2}=\hat{n}-q-1$, $d_{\ell m}=\hat{n}-1$, and $d_{\ell m}=\hat{n}$. By duality this implies that the first generalized Hamming weights of $\Cpt^{perp}$ (that is to say its minimum distance) is given by $\hat{d}_1^{\perp}=3$.
\end{proof}

\begin{corollary}
In case $t=\ell-1$ all generalized Hamming weights of $\Cpt$ are known and given by
$$d_s= \left\{
\begin{array}{ll}
q^{\ell + m -s -1}\nu_{\ell-2}(\ell-1,m-1)%|\mathcal{D}_{\ell-2}(\ell-1,m-1)|
, & \makebox{if $1 \le s \le m$},\\

\\

\hat{n}-\sum_{i=0}^{\ell m-s-1}q^i, & \makebox{otherwise.}
\end{array}
\right.$$
\end{corollary}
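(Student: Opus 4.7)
The proof is essentially immediate once one notices that with $t=\ell-1$, the two ranges of $s$ for which generalized Hamming weights have already been computed become complementary. The plan is to combine Theorem \ref{HigherWts} with Theorem \ref{thm:lastgenHam} and verify that together they handle every index $1 \le s \le \ell m$.

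First, I would apply Theorem \ref{HigherWts} directly: for $1 \le s \le m$, we have $\hat{d}_s = q^{\ell+m-s-1}\nu_{t-1}(\ell-1,m-1)$, and substituting $t=\ell-1$ gives exactly the first branch of the claimed formula, namely $q^{\ell+m-s-1}\nu_{\ell-2}(\ell-1,m-1)$. Next, I would apply Theorem \ref{thm:lastgenHam}: the hypothesis $(\ell-t)m \le s \le \ell m$ becomes, for $t=\ell-1$, precisely $m \le s \le \ell m$, and the conclusion reads $\hat{d}_s = \hat{n}-\sum_{i=0}^{\ell m-s-1}q^i$, matching the second branch.

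The key observation making the corollary work is the coverage of indices: the union of the intervals $[1,m]$ and $[m,\ell m]$ is $[1,\ell m]$, which is the full range $1\le s \le \hat{k} = \ell m$ (using Fact \ref{fact:fromdetcodes1} for the dimension). Thus every generalized Hamming weight falls under one of the two existing theorems. The only point that deserves a brief comment is consistency at the overlap $s=m$, where both formulas apply; here Theorem \ref{HigherWts} and Theorem \ref{thm:lastgenHam} must agree, since they both compute the same invariant $\hat{d}_m$ of the code, so no additional argument is needed beyond citing the two theorems.

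There is really no obstacle to this proof: it is a bookkeeping consequence of the two earlier results, and the only reason it yields a complete answer precisely when $t=\ell-1$ is the arithmetic coincidence $(\ell-t)m = m$ that makes the gap between $s=m$ (the last case handled by Theorem \ref{HigherWts}) and $s=(\ell-t)m$ (the first case handled by Theorem \ref{thm:lastgenHam}) close up. For any smaller $t$, the interval $(m,(\ell-t)m)$ is nonempty and the intermediate weights remain open.
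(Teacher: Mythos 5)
Your proof is correct and coincides with the paper's, which simply combines Theorem \ref{HigherWts} and Theorem \ref{thm:lastgenHam}; your extra remarks on the interval coverage $[1,m]\cup[m,\ell m]=[1,\ell m]$ and the role of the identity $(\ell-t)m=m$ when $t=\ell-1$ just make explicit what the paper leaves implicit.
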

\begin{proof}
This follows by combining Theorems \ref{HigherWts} and \ref{thm:lastgenHam}.
\end{proof}

Also in case $t=\ell$ Theorem \ref{thm:lastgenHam} gives all generalized Hamming weights of $\Cpt$. However, in this case $\Cpt$ is simply a first order projective Reed--Muller code for which all generalized Hamming weights are well known.

\section*{Acknowledgments}
The first and the second named authors %We
are grateful to the Indian Institute of Technology Bombay and the % IIT Bombay and DTU Lyngby
Technical University of Denmark, respectively, for the warm hospitality and support of short visits to these institutions where some of this work was done. %The authors
We %would also like to
thank Andries Brouwer for helpful correspondence and bringing references \cite{brouwerfiol} and \cite{brouwerhamers} to our attention.

\section*{Appendix}

In this appendix we give a self-contained computation of the quantity $\mathfrak{w}_r(t;\ell,m)$. The %employed
method we use is different from the one Delsarte used in \cite{D} and consequently gives rise to an alternative formula to the one Delsarte obtained. Essentially our methods concerns the study of a refined description of the sets $\mathfrak{D}_t(\ell,m)$ as the union of disjoint subsets. For $M \in \Mat_{\ell \times m},$ and $1 \le r \le \ell$, we denote by $\underline{M}_r$ the $r \times m$ matrix obtained by taking the first $r$ rows of $M$. We use this to define the following quantities:

\begin{definition}
Let $1 \le t \le \ell \le m$, $1 \le r \le \ell$ and $1 \le s \le t$. Then we define
$$\mathfrak{D}_{t}(\ell,m;r,s)=\{M \in \mathfrak{D}_{t}(\ell,m) \, | \, \rk(\underline{M}_r)=s\}.$$
Further we define
$$\mathfrak{w}^{(s)}_r(t;\ell,m)=w_H( (\tau_r(M))_{M \in \mathfrak{D}_{t}(\ell,m;r,s)} ),$$
with as before $\tau_r=X_{11}+\cdots+X_{rr}.$
\end{definition}
Note that
\begin{equation}\label{eq:sum ws is w}
\mathfrak{w}_r(t;\ell,m)=\sum_{s=1}^r\mathfrak{w}_r^{(s)}(t;\ell,m).
\end{equation}

\begin{proposition}\label{prop:msrtlm}
Let $r,s,t,\ell$, and $m$ be integers satisfying $1 \le t \le \ell \le m$, $1 \le  r \le \ell$, and $1 \le s \le t$. Then we have
$$|\mathfrak{D}_{t}(\ell,m;r,s)|=\dfrac{[m]_q!}{[m-t]_q!}q^{s(\ell-r)}q^{\binom{s}{2}}q^{\binom{t-s}{2}}
{{r}\brack{s}}_q {{\ell-r}\brack{t-s}}_q.$$
\end{proposition}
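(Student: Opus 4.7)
The plan is to count $|\mathfrak{D}_t(\ell,m;r,s)|$ by splitting each matrix $M$ into its top part $\underline{M}_r$ (the first $r$ rows) and its bottom part $\underline{M}'$ (the remaining $\ell-r$ rows), and counting these two pieces successively.

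First I would count the number of choices for $\underline{M}_r$. By definition, $\underline{M}_r$ ranges over all $r \times m$ matrices of rank $s$, and there are $\mu_s(r,m)$ such matrices by \eqref{mur}. Fix such a top block $\underline{M}_r$ and let $W \subseteq \Fq^m$ be its row space, so $\dim W = s$. The condition that $M$ has rank $t$ is the condition that the row space of $M$ has dimension $t$, i.e.\ that the rows of $\underline{M}'$ together with $W$ span a $t$-dimensional subspace. Equivalently, if $\pi : \Fq^m \twoheadrightarrow \Fq^m / W$ denotes the quotient map, the condition is that the images under $\pi$ of the $\ell-r$ rows of $\underline{M}'$ span a $(t-s)$-dimensional subspace of $\Fq^m/W \cong \Fq^{m-s}$.

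Now I would count in two stages. The number of $(\ell-r)$-tuples of vectors in $\Fq^{m-s}$ whose span has dimension exactly $t-s$ is, by the same formula \eqref{mur}, equal to $\mu_{t-s}(\ell-r,m-s)$. For each such tuple of images in $\Fq^m/W$, the lifts to $\Fq^m$ are unique up to independently adding an element of $W$ to each row, giving exactly $|W|^{\ell-r} = q^{s(\ell-r)}$ admissible bottom blocks $\underline{M}'$. Multiplying, I obtain
$$|\mathfrak{D}_t(\ell,m;r,s)| \;=\; \mu_s(r,m)\cdot q^{s(\ell-r)} \cdot \mu_{t-s}(\ell-r,m-s).$$

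Finally, I would substitute the alternative expressions
$\mu_s(r,m)=q^{\binom{s}{2}}{{r}\brack{s}}_q\,[m]_q!/[m-s]_q!$ and $\mu_{t-s}(\ell-r,m-s)=q^{\binom{t-s}{2}}{{\ell-r}\brack{t-s}}_q\,[m-s]_q!/[m-t]_q!$ given right after \eqref{mur}. The factorials $[m-s]_q!$ telescope, and collecting the $q$-powers yields exactly
$$|\mathfrak{D}_t(\ell,m;r,s)| = \dfrac{[m]_q!}{[m-t]_q!}\,q^{s(\ell-r)}q^{\binom{s}{2}}q^{\binom{t-s}{2}}{{r}\brack{s}}_q{{\ell-r}\brack{t-s}}_q,$$
as claimed. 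I do not expect a real obstacle here; the only delicate point is checking that the counting in the quotient $\Fq^m/W$ is legitimate (the lift of an $(\ell-r)$-tuple spanning a $(t-s)$-dimensional subspace of the quotient really does produce each rank-$t$ extension of $\underline{M}_r$ exactly once), and this is immediate from the fact that the fibre of $\pi^{\ell-r}: (\Fq^m)^{\ell-r}\to (\Fq^m/W)^{\ell-r}$ over any tuple has size $|W|^{\ell-r}$. Boundary cases ($s>r$ or $t-s>\ell-r$) are automatically handled: the corresponding Gaussian binomials vanish, in agreement with $\mathfrak{D}_t(\ell,m;r,s)$ being empty in those ranges.
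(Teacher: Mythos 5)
Your proof is correct, but it takes a genuinely different route from the paper. The paper proves the formula by induction on $\ell$ (with base case $\ell=r$), using the row-deletion recursion
$|\mathfrak{D}_{t}(\ell,m;r,s)|=q^t|\mathfrak{D}_{t}(\ell-1,m;r,s)|+(q^m-q^{t-1})|\mathfrak{D}_{t-1}(\ell-1,m;r,s)|$
followed by an algebraic verification that the closed form satisfies this recursion. You instead give a direct product count: choose the top $r\times m$ block of rank $s$ in $\mu_s(r,m)$ ways, then observe that the bottom $(\ell-r)$ rows are admissible exactly when their images in $\Fq^m/W$ span a $(t-s)$-dimensional subspace, yielding the factorization $|\mathfrak{D}_t(\ell,m;r,s)|=\mu_s(r,m)\,q^{s(\ell-r)}\,\mu_{t-s}(\ell-r,m-s)$, which telescopes to the stated formula. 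Your argument is shorter, non-inductive, and makes the shape of the answer transparent (each factor has a clear combinatorial meaning); the quotient-space step and the boundary cases are handled correctly. What the paper's approach buys in exchange is that the same row-extension mechanism is reused immediately afterwards (Lemma \ref{lem:rec}) to derive the recursion for the weights $\mathfrak w_r^{(s)}(t;\ell,m)$, where a direct product count is no longer available because the value of $\tau_r$ must be tracked; so the inductive machinery is not wasted effort in the paper's overall development, even though for this proposition alone your route is cleaner.
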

\begin{proof}
We choose $r$ arbitrarily and treat it as a fixed constant from now on. If $\ell<r$, then $|\mathfrak{D}_{t}(\ell,m;r,s)|=0$, which fits with the formula. Therefore we suppose from now on that $\ell \ge r$ and we will prove the proposition with induction on $\ell$ for values $\ell \ge r$.

\bigskip

Induction basis: If $\ell=r$, then $\mathfrak{D}_{t}(\ell,m;r,s)=\mathfrak{D}_{t}(\ell,m)$ if $s=t$, while otherwise $\mathfrak{D}_{t}(\ell,m;r,s)=\emptyset$. In the latter case the proposed formula gives the correct value $0$, while if $s=t$ also the correct value from Equation \eqref{mur} is recovered. This completes the induction basis.

\bigskip

Induction step: Suppose $\ell>r$. Let $A \in \mathfrak{D}_{t}(\ell,m;r,s)$. Then $\underline{A}_{\ell-1}$ is an element of $\mathfrak{D}_{t}(\ell-1,m;r,s)$ or of $\mathfrak{D}_{t-1}(\ell-1,m;r,s)$. Conversely, a matrix from $\mathfrak{D}_{t}(\ell-1,m;r,s)$ can be extended (by adding a row from the rowspace of the matrix) to an element of $\mathfrak{D}_{t}(\ell,m;r,s)$ in exactly $q^t$ ways, while a matrix from $\mathfrak{D}_{t-1}(\ell-1,m;r,s)$ can be extended (by adding a row not from the rowspace of the matrix) to an element of $\mathfrak{D}_{t}(\ell,m;r,s)$ in exactly $q^m-q^{t-1}$ ways. Therefore
$$|\mathfrak{D}_{t}(\ell,m;r,s)|=q^t|\mathfrak{D}_{t}(\ell-1,m;r,s)|+(q^m-q^{t-1})|\mathfrak{D}_{t-1}(\ell-1,m;r,s)|.$$
Using the induction hypothesis, this equation implies:
\begin{eqnarray*}
|\mathfrak{D}_{t}(\ell,m;r,s)| & = &\dfrac{[m]_q!}{[m-t]_q!}q^{s(\ell-r)}q^{\binom{s}{2}}q^{\binom{t-s}{2}}
{{r}\brack{s}}_q {{\ell-r}\brack{t-s}}_q\cdot \\
 & & \left(q^t q^{-s} \dfrac{q^{\ell-r-t+s}-1}{q^{\ell-r}-1}+(q^m-q^{t-1})\dfrac{1}{q^{m-t+1}}q^{-s}q^{-(t-1-s)}\dfrac{q^{t-s}-1}{q^{\ell-r}-1}\right).
\end{eqnarray*}
However, the term between the brackets is easily seen to be equal to $1$, concluding the inductive proof.
\end{proof}

The key argument in the induction step above can also be used to prove the following.

\begin{lemma}\label{lem:rec}
Let $r,s,t,\ell$, and $m$ be integers satisfying $1 \le t \le \ell \le m$, $1 \le  r \le \ell$, and $1 \le s \le t$. Then we have
$$\mathfrak w_r^{(s)}(t;\ell,m)=q^t\mathfrak w^{(s)}_r(t;\ell-1,m)+(q^m-q^{t-1})\mathfrak w^{(s)}_r(t-1;\ell-1,m), \, \makebox{if $\ell>r$}$$
and
$$\mathfrak w_r(t;\ell,m)=q^t\mathfrak w_r(t;\ell-1,m)+(q^m-q^{t-1})\mathfrak w_r(t-1;\ell-1,m), \, \makebox{if $\ell>r$}.$$
\end{lemma}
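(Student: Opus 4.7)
The plan is to reuse the row-deletion argument from the induction step in the proof of Proposition \ref{prop:msrtlm}, together with one crucial observation: because we are assuming $\ell>r$, the partial trace $\tau_r=X_{11}+\cdots+X_{rr}$ only involves entries from the first $r$ rows, and in particular is not affected at all by the last row of an $\ell\times m$ matrix. Consequently, the surjection $M\mapsto \underline{M}_{\ell-1}$ from $\mathfrak{D}_t(\ell,m;r,s)$ to $\mathfrak{D}_t(\ell-1,m;r,s)\sqcup\mathfrak{D}_{t-1}(\ell-1,m;r,s)$ sends matrices with $\tau_r(M)\neq 0$ precisely onto matrices $N$ with $\tau_r(N)\neq 0$, preserving the rank of $\underline{M}_r$.

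First I would verify that the map is well-defined and that the fibre sizes are exactly as in the proof of Proposition \ref{prop:msrtlm}: $q^t$ above elements of $\mathfrak{D}_t(\ell-1,m;r,s)$ (row extensions lying in the row space) and $q^m-q^{t-1}$ above elements of $\mathfrak{D}_{t-1}(\ell-1,m;r,s)$ (row extensions outside the row space). The only new thing to check beyond that proof is that $\underline{M}_r$ is unchanged by deleting the row of index $\ell$ (which is true since $r<\ell$), so the parameter $s$ is indeed preserved on both sides. Combined with the invariance $\tau_r(M)=\tau_r(\underline{M}_{\ell-1})$, this directly gives
\[
\mathfrak{w}_r^{(s)}(t;\ell,m)=q^t\mathfrak{w}_r^{(s)}(t;\ell-1,m)+(q^m-q^{t-1})\mathfrak{w}_r^{(s)}(t-1;\ell-1,m),
\]
which is the first identity.

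For the second identity I would simply sum the first over $s$ from $1$ to $r$ and invoke Equation \eqref{eq:sum ws is w}; terms with $s>t$ (resp.\ $s>t-1$) are zero by convention since $\mathfrak{D}_{t}(\ell,m;r,s)$ is empty when $s>\rk(M)$, so the summation ranges match up on both sides and yield
\[
\mathfrak{w}_r(t;\ell,m)=q^t\mathfrak{w}_r(t;\ell-1,m)+(q^m-q^{t-1})\mathfrak{w}_r(t-1;\ell-1,m).
\]
There is no real obstacle here; the whole proof is essentially bookkeeping, with the only substantive point being the invariance of $\tau_r$ under removal of a row of index strictly greater than $r$, which is exactly where the hypothesis $\ell>r$ is used.
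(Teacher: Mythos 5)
Your proposal is correct and follows essentially the same route as the paper: both rely on the row-extension counts ($q^t$ and $q^m-q^{t-1}$) established in the induction step of Proposition \ref{prop:msrtlm}, together with the observation that $\tau_r$ is unaffected by the last row when $\ell>r$, and then obtain the second identity by summing over $s$ via Equation \eqref{eq:sum ws is w}.
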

\begin{proof}
In the proof of Proposition \ref{prop:msrtlm}, we have seen that any matrix from $\mathfrak{D}_{t}(\ell-1,m;r,s)$ can be extended to an element of $\mathfrak{D}_{t}(\ell,m;r,s)$ in exactly $q^t$ ways, while a matrix from $\mathfrak{D}_{t-1}(\ell-1,m;r,s)$ can be extended to an element of $\mathfrak{D}_{t}(\ell,m;r,s)$ in $q^m-q^{t-1}$ ways. If $\ell > r$ the value of $\tau_r$ is the same for the original matrix and its extension. This immediately implies the first equation in the lemma. The second one follows from the first one using Equation \eqref{eq:sum ws is w}.
\end{proof}

\begin{remark}
By interchanging the roles of rows and columns, one can also show that
$$\mathfrak w_r^{(s)}(t;\ell,m)=q^t\mathfrak w^{(s)}_r(t;\ell,m-1)+(q^\ell-q^{t-1})\mathfrak w^{(s)}_r(t-1;\ell,m-1), \, \makebox{if $m>r$},$$
and
$$\mathfrak w_r(t;\ell,m)=q^t\mathfrak w_r(t;\ell-1,m)+(q^\ell-q^{t-1})\mathfrak w_r(t-1;\ell,m-1), \, \makebox{if $m>r$}.$$
\end{remark}

We will now derive a closed expression for the quantities $\mathfrak{w}_s(r,t;\ell,m)$. Like in the proof of Proposition \ref{prop:msrtlm}, we will use an inductive argument with base $r=\ell$. This explains why we first settle this case separately.

\begin{proposition}\label{prop:frak w l is r}
Let $s,t,\ell$, and $m$ be integers satisfying $1 \le \ell \le m$ and $1 \le s \le t$. Then we have
$$\mathfrak{w}_{\ell}^{(s)}(t;\ell,m)=0, \ \makebox{ if $t \neq s$,}$$
while
\begin{eqnarray*}
\mathfrak{w}_{\ell}^{(t)}(t;\ell,m) & = & \mathfrak{w}_{\ell}(t;\ell,m) = \dfrac{q-1}{q}\left(\mu_t(\ell,m) - (-1)^tq^{\binom{t}{2}}{{\ell}\brack{t}}_q \right).
\end{eqnarray*}
\end{proposition}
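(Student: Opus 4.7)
The first assertion is immediate: when $r=\ell$, $\underline{M}_\ell=M$, so $\rk(\underline{M}_\ell)=\rk(M)=t$ for every $M\in\mathfrak{D}_t(\ell,m)$. Therefore $\mathfrak{D}_t(\ell,m;\ell,s)=\emptyset$ for $s\ne t$, while $\mathfrak{D}_t(\ell,m;\ell,t)=\mathfrak{D}_t(\ell,m)$. This yields both $\mathfrak{w}_\ell^{(s)}(t;\ell,m)=0$ for $s\ne t$ and the identity $\mathfrak{w}_\ell^{(t)}(t;\ell,m)=\mathfrak{w}_\ell(t;\ell,m)$.

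For the closed-form expression the plan is induction on $m\ge\ell$, with $t$ varying in the hypothesis; the structural point that makes the induction close cleanly is that the correction term $\delta(t,\ell):=(-1)^tq^{\binom{t}{2}}{{\ell}\brack{t}}_q$ does not depend on $m$. In the step $m>\ell=r$, I would apply the recursion from the Remark following Lemma \ref{lem:rec},
$$\mathfrak{w}_\ell(t;\ell,m)=q^t\mathfrak{w}_\ell(t;\ell,m-1)+(q^\ell-q^{t-1})\mathfrak{w}_\ell(t-1;\ell,m-1),$$
together with the parallel identity
$$\mu_t(\ell,m)=q^t\mu_t(\ell,m-1)+(q^\ell-q^{t-1})\mu_{t-1}(\ell,m-1),$$
which is obtained by the same column-adding argument used to prove Lemma \ref{lem:rec}. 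Substituting the inductive hypothesis, the step collapses to verifying the single algebraic identity
$$(q^t-1)\,\delta(t,\ell)=(q^{t-1}-q^\ell)\,\delta(t-1,\ell),$$
which follows from the $q$-Pascal relation $(q^t-1){{\ell}\brack{t}}_q=(q^{\ell-t+1}-1){{\ell}\brack{t-1}}_q$, the factorization $q^\ell-q^{t-1}=q^{t-1}(q^{\ell-t+1}-1)$, and the exponent identity $\binom{t}{2}=(t-1)+\binom{t-1}{2}$.

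The main obstacle is the base case $m=\ell$, where neither Lemma \ref{lem:rec} nor its Remark is available, since both require $\ell>r$ or $m>r$ while here $r=\ell=m$. Here the plan is to invoke Buckhiester's theorem \cite{buckhiester}, which is exactly the asserted identity in the square case. A self-contained alternative is a character-sum argument: for a nontrivial additive character $\psi$ of $\Fq$, the bijection $M\mapsto cM$ on $\mathfrak{D}_t(\ell,\ell)$ for nonzero $c\in\Fq$ shows that $\sum_{\rk M=t}\psi(c\,\tau_\ell(M))$ is independent of $c\ne 0$, whence
$$\mathfrak{w}_\ell(t;\ell,\ell)=\frac{q-1}{q}\bigl(\mu_t(\ell,\ell)-S_t\bigr),\qquad S_t:=\sum_{M\in\mathfrak{D}_t(\ell,\ell)}\psi(\tau_\ell(M)).$$
Partitioning rank-$t$ matrices by row space (ranging over $G_{t,\ell}$) and exploiting the $\GL_\ell\times\GL_\ell$-invariance of the resulting inner character sum would then reduce $S_t$ to a single Gauss-type evaluation that yields $S_t=(-1)^tq^{\binom{t}{2}}{{\ell}\brack{t}}_q$, from which the formula follows.
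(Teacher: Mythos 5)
Your treatment of the first assertion and of the identity $\mathfrak{w}_\ell^{(t)}(t;\ell,m)=\mathfrak{w}_\ell(t;\ell,m)$ coincides with the paper's. For the closed formula you take a genuinely different route: the paper inducts on $\ell$ using Theorem \ref{thm:keyrec} with $r=\ell$, a recursion that decrements $\ell$ and $m$ \emph{simultaneously}, so the induction sweeps up the square case along the way and bottoms out at $\ell=1$, where $\mathfrak{w}_1(1;1,m)=(q-1)q^{m-1}$ is immediate. You instead fix $\ell=r$ and induct on $m$ via the column recursion of the Remark after Lemma \ref{lem:rec}. Your induction step is correct as algebra: the identity $(q^t-1)\delta(t,\ell)=(q^{t-1}-q^{\ell})\delta(t-1,\ell)$ does hold and does close the step, and the companion recursion $\mu_t(\ell,m)=q^t\mu_t(\ell,m-1)+(q^{\ell}-q^{t-1})\mu_{t-1}(\ell,m-1)$ is the legitimate column analogue of the row identity the paper itself uses.

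The problem is where your induction bottoms out. Since the column recursion requires $m>r=\ell$, your base case is $m=\ell$, which is not a degenerate instance but the full-strength square case --- exactly Buckhiester's theorem, i.e.\ essentially all of the content of the proposition. Citing \cite{buckhiester} is mathematically legitimate (though you would still have to verify that his count of trace-$\alpha$ matrices, summed over $\alpha\ne 0$, matches $\frac{q-1}{q}\bigl(\mu_t(\ell,\ell)-(-1)^tq^{\binom{t}{2}}{{\ell}\brack{t}}_q\bigr)$), but it defeats the stated purpose of the appendix, namely a derivation independent of Delsarte's and Buckhiester's computations. Your ``self-contained alternative'' does not repair this: the character-sum reduction to $S_t=\sum_{M\in\mathfrak{D}_t(\ell,\ell)}\psi(\tau_\ell(M))$ is fine, but the claim that partitioning by row space reduces $S_t$ to ``a single Gauss-type evaluation'' giving $(-1)^tq^{\binom{t}{2}}{{\ell}\brack{t}}_q$ is precisely the hard computation and is not carried out --- after the partition you are left with sums of $\psi$ of traces of products of full-rank rectangular matrices, which is not a one-line evaluation. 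In short: the step algebra is right, but the base case carries the entire difficulty and is either outsourced or left as a sketch; the paper's choice of the mixed recursion in Theorem \ref{thm:keyrec} is exactly what allows it to avoid ever confronting the square case as a base case.
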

\begin{proof}
We have already seen that $\mathfrak{D}_{t}(\ell,m;r,s)=\mathfrak{D}_{t}(\ell,m)$ if $s=t$, while otherwise $\mathfrak{D}_{t}(\ell,m;r,s)=\emptyset$. Therefore the first part of the proposition follows, as well as the identity $\mathfrak{w}_{\ell}^{(t)}(t;\ell,m)=\mathfrak{w}_{\ell}(t;\ell,m)$. Now we prove that
$$
\mathfrak{w}_{\ell}(t;\ell,m) = \dfrac{q-1}{q}\left(\mu_t(\ell,m) - (-1)^tq^{\binom{t}{2}}{{\ell}\brack{t}}_q \right)
$$
with induction on $\ell$.

\bigskip
Induction basis: if $\ell=1$ (implying that $t=1$ as well), Proposition \ref{prop:w1hat} (or a direct computation) implies that $\mathfrak{w}_{1}(t;1,m)=(q-1)q^{m-1}$, which fits with the formula we wish to show.

\bigskip
Induction step: Assume that the formula holds for $\ell-1$. Using Theorem \ref{thm:keyrec} in the special case that $r=\ell$, we see that $$\mathfrak{w}_{\ell}(t;\ell,m)=q^t\mathfrak{w}_{\ell-1}(t;\ell-1,m-1)-q^{t-1}\mathfrak{w}_{\ell-1}(t-1;\ell-1,m-1)+A,$$
where $A$ is easily seen to be equal to
$$A=\frac{q-1}{q}\left(\mu_t(\ell,m)-q^t\mu_t(\ell-1,m-1)+q^{t-1}\mu_{t-1}(\ell-1,m-1)\right),$$
using the identity $\mu_t(\ell,m)=q^t \mu_t(\ell-1,m)+(q^m-q^{t-1})\mu_{t-1}(\ell-1,m).$
The induction hypothesis now implies that
\begin{eqnarray*}
\mathfrak{w}_{\ell}(t;\ell,m) & = & \frac{q-1}{q}\left( \mu_t(\ell,m) - q^t(-1)^tq^{\binom{t}{2}}{{\ell-1}\brack{t}}_q + q^{t-1}(-1)^{t-1}q^{\binom{t-1}{2}}{{\ell-1}\brack{t-1}}_q\right)\\
& = & \frac{q-1}{q}\left( \mu_t(\ell,m)- (-1)^tq^{\binom{t}{2}}\left( q^t{{\ell-1}\brack{t}}_q + {{\ell-1}\brack{t-1}}_q \right)\right)\\
& = & \frac{q-1}{q}\left( \mu_t(\ell,m)- (-1)^tq^{\binom{t}{2}}{{\ell}\brack{t}}_q \right),
\end{eqnarray*}
which is what we wanted to show.
\end{proof}

Now that the case $r=\ell$ is settled, we deal with the general case.

\begin{theorem}\label{thm:formula ws}
\begin{eqnarray*}
\mathfrak{w}_r^{(s)}(t;\ell,m) & = & \dfrac{q-1}{q}q^{\binom{s}{2}}\left(\dfrac{[m]_q!}{[m-t]_q!} - (-1)^s \dfrac{[m-s]_q!}{[m-t]_q!} \right)q^{s(\ell-r)}q^{\binom{t-s}{2}}
{{r}\brack{s}}_q {{\ell-r}\brack{t-s}}_q.
%\\
% & = & q^{m-1} q^{\binom{s-1}{2}} \left( \sum_{i=1}^s \dfrac{[m-i]!(-1)^{i+1}q^{s-i}}{[m-t]!} \right) q^{s(\ell-r)}q^{\binom{t-s}{2}}
%{{r}\brack{s}} {{\ell-r}\brack{t-s}}
\end{eqnarray*}
\end{theorem}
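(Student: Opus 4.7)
The plan is to prove the formula by induction on $\ell$ with $\ell \geq r$, treating $r$ and $s$ as fixed parameters. The base case $\ell = r$ comes essentially for free from Proposition~\ref{prop:frak w l is r}: for $s = t$ the claimed expression reduces to $\frac{q-1}{q} q^{\binom{t}{2}} {{\ell}\brack{t}}_q \bigl([m]_q!/[m-t]_q! - (-1)^t\bigr)$, which agrees with Proposition~\ref{prop:frak w l is r} upon substituting $\mu_t(\ell,m) = q^{\binom{t}{2}} {{\ell}\brack{t}}_q [m]_q!/[m-t]_q!$; for $s < t$ the factor ${{\ell - r}\brack{t - s}}_q = {{0}\brack{t - s}}_q$ vanishes, matching $\mathfrak{D}_t(\ell,m;r,s) = \emptyset$.

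For the induction step $\ell > r$, I would apply the first recursion in Lemma~\ref{lem:rec}. The key observation is that the claimed right-hand side splits naturally as
\[
\mathfrak{w}_r^{(s)}(t;\ell,m) = \frac{q-1}{q}\bigl|\mathfrak{D}_t(\ell,m;r,s)\bigr| - \frac{q-1}{q}(-1)^s\, E(t,\ell),
\]
where
\[
E(t,\ell) := q^{\binom{s}{2}}\, q^{s(\ell - r)}\, q^{\binom{t-s}{2}}\, \frac{[m-s]_q!}{[m-t]_q!}\, {{r}\brack{s}}_q\, {{\ell-r}\brack{t-s}}_q .
\]
The first summand already satisfies the required recursion, because the proof of Proposition~\ref{prop:msrtlm} established $|\mathfrak{D}_t(\ell,m;r,s)| = q^t|\mathfrak{D}_t(\ell-1,m;r,s)| + (q^m - q^{t-1})|\mathfrak{D}_{t-1}(\ell-1,m;r,s)|$. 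Hence it suffices to verify the matching identity $E(t,\ell) = q^t E(t,\ell-1) + (q^m - q^{t-1}) E(t-1,\ell-1)$. This reduces, via the Pascal-type identity
\[
{{\ell-r}\brack{t-s}}_q = q^{t-s}{{\ell-1-r}\brack{t-s}}_q + {{\ell-1-r}\brack{t-s-1}}_q,
\]
to matching the two resulting summands against the two terms on the right: the first matches $q^t E(t,\ell-1)$ after collecting $q^{t-s}\cdot q^s = q^t$, and the second matches $(q^m - q^{t-1}) E(t-1,\ell-1)$ using $\binom{t-s}{2} - \binom{t-1-s}{2} = t-s-1$ together with $[m-t+1]_q!/[m-t]_q! = q^{m-t+1}-1$, which combine to give precisely $q^{t-1}(q^{m-t+1}-1) = q^m - q^{t-1}$.

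The main obstacle is the exponent bookkeeping in the $E$-recursion, but once the splitting into the $\mu$-part and the $E$-part is fixed, the verification is essentially mechanical. Degenerate cases do not require separate treatment: when $s = t$ the $(t-1,\ell-1)$ term vanishes via ${{\ell-1-r}\brack{-1}}_q = 0$, so the recursion collapses to $E(t,\ell)=q^t E(t,\ell-1)$, which is immediate; and when parameters force $\mathfrak{D}_t(\ell,m;r,s) = \emptyset$ the usual vanishing conventions on Gaussian binomials make both sides of the claimed formula zero. Assembling the two pieces completes the induction step.
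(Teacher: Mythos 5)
Your proposal is correct and follows essentially the same route as the paper: induction on $\ell$ with base case $\ell=r$ settled by Proposition~\ref{prop:frak w l is r}, and the induction step carried out via the recursion of Lemma~\ref{lem:rec}. The paper leaves the induction step as ``very similar computations as in the proof of Proposition~\ref{prop:msrtlm}''; your splitting of the right-hand side into the $|\mathfrak{D}_t(\ell,m;r,s)|$-part and the $E$-part, with the $q$-Pascal identity and the exponent check $\binom{t-s}{2}-\binom{t-s-1}{2}=t-s-1$, is exactly the computation being alluded to, carried out explicitly and correctly.
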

\begin{proof}
We prove the theorem by induction on $\ell$. If $\ell<r$, $\mathfrak{w}_r^{(s)}(t;\ell,m)=0$, which is consistent with the formula. If $\ell=r$, we have $\mathfrak{w}_r^{(s)}(t;\ell,m)=0$ if $s \neq t$ and $\mathfrak{w}_r^{(s)}(t;\ell,m)=\mathfrak{w}_r(t;\ell,m)$ if $s=t$. Using Proposition \ref{prop:frak w l is r} we see that the case $\ell=r$ of the theorem is valid.

\bigskip

Now suppose $\ell>r$. We may then apply Lemma \ref{lem:rec} and apply the induction hypothesis.
Performing very similar computations as in the proof of Proposition \ref{prop:msrtlm}, the induction step follows.
\end{proof}

We can now state our alternative formula for $\mathfrak{w}_r(t;\ell,m)$.

\begin{theorem}\label{thm:general formula}
We have
\begin{eqnarray*}
\mathfrak{w}_r(t;\ell,m) & = & \frac{q-1}{q}\sum_{s=1}^r q^{\binom{s}{2}}\left(\dfrac{[m]_q!}{[m-t]_q!} - (-1)^s \dfrac{[m-s]_q!}{[m-t]_q!} \right)q^{s(\ell-r)}q^{\binom{t-s}{2}}
{{r}\brack{s}}_q {{\ell-r}\brack{t-s}}_q\\
 & = &\dfrac{q-1}{q}\left(\mu_t(\ell,m)  -\sum_{s=0}^r q^{\binom{s}{2}}(-1)^s \dfrac{[m-s]_q!}{[m-t]_q!}q^{s(\ell-r)}q^{\binom{t-s}{2}}
{{r}\brack{s}}_q {{\ell-r}\brack{t-s}}_q\right).
\end{eqnarray*}
\end{theorem}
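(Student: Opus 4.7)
The plan is to derive the first equality in Theorem \ref{thm:general formula} directly from Theorem \ref{thm:formula ws} by summation, and then convert to the second equality via a q-Vandermonde identity.

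First I would apply Equation \eqref{eq:sum ws is w}, namely $\mathfrak{w}_r(t;\ell,m) = \sum_{s=1}^r \mathfrak{w}_r^{(s)}(t;\ell,m)$, and substitute the closed form from Theorem \ref{thm:formula ws}. Pulling out the common factor $(q-1)/q$ immediately gives the first displayed formula.

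For the second equality, I would split the summand according to the two pieces of the bracketed factor $\bigl(\tfrac{[m]_q!}{[m-t]_q!} - (-1)^s \tfrac{[m-s]_q!}{[m-t]_q!}\bigr)$. The term carrying $(-1)^s$ is already essentially the sum displayed in the second formula (except that it runs from $s=1$ to $r$ rather than $s=0$ to $r$). To extend the range to $s=0$, I would add and subtract the $s=0$ contribution, which equals $\tfrac{[m]_q!}{[m-t]_q!} q^{\binom{t}{2}} {{\ell-r}\brack{t}}_q$. The remaining piece then forces me to establish the identity
\[
\dfrac{[m]_q!}{[m-t]_q!}\sum_{s=0}^{r} q^{\binom{s}{2}+s(\ell-r)+\binom{t-s}{2}} {{r}\brack{s}}_q {{\ell-r}\brack{t-s}}_q \;=\; \mu_t(\ell,m).
\]

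Using the elementary rearrangement $\binom{s}{2}+\binom{t-s}{2}=\binom{t}{2}-s(t-s)$, together with the alternative expression $\mu_t(\ell,m)=q^{\binom{t}{2}} {{\ell}\brack{t}}_q \tfrac{[m]_q!}{[m-t]_q!}$ recorded after Equation \eqref{mur}, this identity reduces to
\[
\sum_{s=0}^{r} q^{s(\ell-r-t+s)} {{r}\brack{s}}_q {{\ell-r}\brack{t-s}}_q \;=\; {{\ell}\brack{t}}_q,
\]
which is a standard q-Vandermonde convolution (a rewriting of Andrews, Theorem 3.3, in the same spirit as the identity invoked in Example \ref{ex:t=ell}). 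The main obstacle is purely bookkeeping: matching the shift conventions between the various q-analogues of Vandermonde's identity and arranging the $q$-exponents so that the identity is in its standard form. Once this is done, the computation is routine and the theorem follows.
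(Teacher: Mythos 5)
Your proposal is correct, and the first equality is handled exactly as in the paper (sum Theorem \ref{thm:formula ws} over $s$ via Equation \eqref{eq:sum ws is w}). For the second equality, however, you take a genuinely different route. The paper observes that the term you need to identify with $\mu_t(\ell,m)$, namely $\sum_{s=0}^r q^{\binom{s}{2}}\frac{[m]_q!}{[m-t]_q!}q^{s(\ell-r)}q^{\binom{t-s}{2}}{{r}\brack{s}}_q {{\ell-r}\brack{t-s}}_q$, is term-by-term equal to $|\mathfrak{D}_t(\ell,m;r,s)|$ by Proposition \ref{prop:msrtlm}, so the identity is immediate from the disjoint-union decomposition $\mathfrak{D}_t(\ell,m)=\bigsqcup_{s}\mathfrak{D}_t(\ell,m;r,s)$ --- no further computation is needed. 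You instead prove the same identity algebraically: your exponent rearrangement $\binom{s}{2}+\binom{t-s}{2}=\binom{t}{2}-s(t-s)$ is correct, and the resulting statement $\sum_{s=0}^{r} q^{s(\ell-r-t+s)} {{r}\brack{s}}_q {{\ell-r}\brack{t-s}}_q = {{\ell}\brack{t}}_q$ is indeed the $q$-Vandermonde convolution (your bookkeeping with the $s=0$ term also checks out, since the two pieces of the bracket agree at $s=0$ and so cancel when both sums are extended to start at $0$). What the paper's route buys is that the identity comes for free from machinery already established in the appendix; what your route buys is independence from Proposition \ref{prop:msrtlm}, at the cost of invoking an external identity. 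Indeed, juxtaposing the two arguments shows that the appendix implicitly contains a combinatorial proof of $q$-Vandermonde. One small nit: the $q$-Vandermonde summation is not quite Theorem 3.3 of Andrews (the $q$-binomial theorem); you would want the appropriate Vandermonde-type corollary, but this does not affect the validity of the argument.
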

\begin{proof}
The first equation is a direct consequence of Equation \eqref{eq:sum ws is w} and Theorem \ref{thm:formula ws}. For the second equation, note that
$$\sum_{s=0}^r q^{\binom{s}{2}}\dfrac{[m]_q!}{[m-t]_q!}q^{s(\ell-r)}q^{\binom{t-s}{2}}
{{r}\brack{s}}_q {{\ell-r}\brack{t-s}}_q=\sum_{s=0}^r |\mathfrak{D}_t(\ell,m;r,s)|=|\mathfrak{D}_t(\ell,m)|,$$
since $\mathfrak{D}_t(\ell,m)$ is the disjoint union of the sets $\mathfrak{D}_t(\ell,m;r,s)$, $0 \le s \le r$.
\end{proof}

The above theorem in particular implies that
\begin{equation}\label{eq:ptr_alternative}
P_t(r)=\sum_{s=0}^r q^{\binom{s}{2}}(-1)^s \dfrac{[m-s]_q!}{[m-t]_q!}q^{s(\ell-r)}q^{\binom{t-s}{2}}
{{r}\brack{s}}_q {{\ell-r}\brack{t-s}}_q,
\end{equation}
where $P_t(r)$ is the expression from Equation \eqref{eq:ptr_delsarte}. It is not immediately clear that these two expressions for $P_t(r)$ are in fact equal. However, in \cite[Eq. (15)]{D2} the generalized Krawtchouk polynomial $F(x,k,n)$ is defined (involving parameters $x,k,n$ as well as a parameter $c$). If one chooses $c=q^{m-\ell}$, $n=\ell$, $k=t$, and $x=r$ one obtains the polynomial $P_t(r)$ from Equation \eqref{eq:ptr_delsarte}. A second and a third alternative expression for $F(x,k,n)$ are then given in \cite[Section 5.1]{D2}. The second one (with the same choice for the parameters $n,k,x$, and $c$ as before), precisely yields Equation \eqref{eq:ptr_alternative}.


\begin{thebibliography}{abcde}

\bibitem{andrews} G. E. Andrews, \emph{The Theory of Partitions}, Encyclopedia of Mathematics and its Applications,  Volume 2, Addison-Wesley, %Publishing Company, 
Reading, Massachusetts,    1976.

\bibitem{detcodes1} P. Beelen, S. R. Ghorpade, and S. U. Hasan, {Linear codes associated to determinantal varieties}, {\em Discrete Math.} %ematics} 
{\bfseries 338} (2015), 1493--1500.

\bibitem{BCN} A. E. Brouwer, A. M. Cohen, and A. Neumaier, \emph{Distance Regular Graphs}, Springer-Verlag, New York, 1989.

\bibitem{brouwerfiol} A. E. Brouwer and M. A. Fiol, {Distance-regular graphs where the distance-$d$ graph has fewer distinct eigenvalues}, \emph{Lin. Alg. Appl.} {\bfseries 480} (2015), 115--126.
%preprint, 2014 (available on http://www.win.tue.nl/~aeb/preprints.html).

\bibitem{brouwerhamers} A. E. Brouwer and W. H. Haemers, \emph{Spectra of Graphs}, Springer, New York, 2012.

\bibitem{buckhiester} P. G. Buckhiester, The number of $n \times n$  matrices of rank $r$  and trace $\alpha$ over a finite field, {\em Duke Math. J.} {\bfseries 39} (1972), 695--699.

%\bibitem{C}
%P. Camion,  Linear codes with given automorphism groups,
%{\em Discrete Math.} {\bfseries 3} (1972), 33--45.

\bibitem{DG} M. Datta and S. R. Ghorpade, {\rm Higher weights of affine Grassmann codes and their duals}, in:  \emph{Algorithmic Arithmetic, Geometry, and Coding Theory} (Luminy, France, June 2013), %(AGCT.2013, Luminy),  
\emph{Contemporary Mathematics}, Vol. 637, %{\bfseries 637}, 
pp. 79--91,
American Mathematical Society, Providence, RI, 2015.

\bibitem{D2} P. Delsarte,  {Properties and applications of the recurrence $F(i + 1, k + 1, n + 1) = q^{k+1} F(i, k + 1, n)- q^kF(i, k, n)$},
\emph{Siam J. Appl. Math.}, {\bfseries 31} (1976), 262--270. %no. 2, 262--270

\bibitem{D}
P. Delsarte,  {Bilinear forms over a finite field, with applications to coding theory},
\emph{J. Combin. Theory Ser. A} {\bfseries 25} (1978), %no. 3,
226--241.

\bibitem{survey}
S. R. Ghorpade, {Abhyankar's work on Young tableaux and some recent developments}, in:
\emph{Algebraic Geometry and Its Applications},  pp. 233--265,
Springer--Verlag, New York, 1994. %215--249.

\bibitem{GPP} S. R. Ghorpade, A. R. Patil and H. K. Pillai, {Decomposable subspaces,  linear sections of Grassmann varieties, and higher weights of Grassmann codes}, %arXiv.cs.IT/0710.5161 (2007).
{\em Finite Fields Appl.} {\bfseries 15} (2009),  54--68.

\bibitem{GS} S. R. Ghorpade and P. Singh, {Minimum distance and the minimum weight codewords of Schubert codes}.  \emph{Finite Fields Appl.} {\bfseries 49}  (2018), 1--28.

\bibitem{GT} S. R. Ghorpade and M. A. Tsfasman, {\rm Schubert varieties, linear codes and enumerative combinatorics}, \emph{Finite Fields Appl.} {\bfseries 11}  (2005), 684--699.

\bibitem{HJR2} J. P. Hansen, T. Johnsen, and K. Ranestad, {\rm Grassmann codes and Schubert unions}, \emph{Arithmetic, Geometry and Coding Theory} (AGCT-2005, Luminy), S\'eminaires et Congr\`es, vol. 21, pp. 103--121, Soci\'et\'e Math\'ematique de France,  Paris, 2009.

\bibitem{L} G. Lachaud, The parameters of projective Reed-Muller codes, \emph{Discrete Math.} {\bf 81} (1990), %no. 2, 217--221.
%
%{\bibitem {LN} R. Lidl and H. Niederreiter, {\em Finite Fields}, %Enc. of Math. and its Appl., Vol. 20,
%Cambridge University Press, Cambridge, 1983.\label{bib13}}

%\bibitem{Land}
%G. Landsberg, {Ueber eine Anzahlbestimmung und eine damit
%zusammenh{\"a}ngende Reihe}, {\em J. Reine Angew. Math.}  {\bfseries 111} (1893), 87--88.

\bibitem{Little} J. B. Little, Algebraic geometry codes from higher dimensional varieties, in: \emph{Advances in Algebraic Geometry Codes}, pp. 257--294, World Scientific, Singapore, 2008. %E. M. Moro, C. M. Gomez and C. Munuera (Eds.),

\bibitem{N} D. Yu. Nogin, Codes associated to Grassmannians, \emph{Arithmetic, Geometry and Coding Theory (Luminy, 1993)}, pp. 145--154, Walter de Gruyter, Berlin/New York, 1996.


%\bibitem{Se} J.-P. Serre, Lettre \'a M. Tsfasman, Journ\'ees Arithm\'etiques (Luminy, 1989). {\em Ast\'erisque} No. 198-200 (1991), 351--353.

\bibitem{TV1}
M. A. Tsfasman and S. G. Vl\u{a}du\c{t},  \emph{Algebraic Geometric
 Codes}, Kluwer, Amsterdam, 1991.

\bibitem{TV2}
M. A. Tsfasman and S. G. Vl\u{a}du\c{t}, Geometric approach to higher weights,
\emph{IEEE Trans. Inform. Theory} {\bfseries 41} (1995), 1564--1588.

%\bibitem{W} V. K. Wei,  Generalized Hamming weights for linear codes, \emph{IEEE Trans. Inform. Theory} {\bfseries 37} (1991), 1412--1418.

\bibitem{X} {X.\,Xiang}, {On the minimum distance conjecture for Schubert codes}, {\em IEEE Trans. Inform.Theory}, {\bfseries 54} (2008), 486--488.
\end{thebibliography}
\end{document}